\newtheorem{theorem}{Theorem}
\numberwithin{theorem}{section}
\newtheorem{corollary}[theorem]{Corollary}
\newtheorem{lemma}[theorem]{Lemma}
\newtheorem{proposition}[theorem]{Proposition}
\theoremstyle{definition}
\newtheorem{question}[theorem]{Question}
\newcommand{\pa}{{\mathbf{PA}}}
\newcommand{\con}{\operatorname{Con}}
\newcommand{\rfn}{\operatorname{RFN}}
\newcommand{\feps}{F_{\varepsilon_0}}
\newcommand{\isigma}{\mathbf{I\Sigma}}
\begin{document}

\title{Short Proofs for Slow Consistency}

\author{Anton Freund}
\author{Fedor Pakhomov}

\keywords{Finite consistency statements, Slow consistency, Polynomial proofs}
\subjclass[2010]{03F20, 03F30, 03F40}

\begin{abstract}
Let $\operatorname{Con}(\mathbf T)\!\restriction\!x$ denote the finite consistency statement ``there are no proofs of contradiction in $\mathbf T$ with $\leq x$ symbols''. For a large class of natural theories $\mathbf T$, Pudl\'ak has shown that the lengths of the shortest proofs of $\operatorname{Con}(\mathbf T)\!\restriction\!n$ in the theory $\mathbf T$ itself are bounded by a polynomial in $n$. At the same time he conjectures that $\mathbf T$ does not have polynomial proofs of the finite consistency statements $\operatorname{Con}(\mathbf T+\operatorname{Con}(\mathbf T))\!\restriction\!n$. In contrast we show that Peano arithmetic~($\mathbf{PA}$) has polynomial proofs of \mbox{$\operatorname{Con}(\mathbf{PA}+\operatorname{Con}^*(\mathbf{PA}))\!\restriction\!n$}, where $\operatorname{Con}^*(\mathbf{PA})$ is the slow consistency statement for Peano arithmetic, introduced by S.-D.\ Friedman, Rathjen and Weiermann. We also obtain a new proof of the result that the usual consistency statement $\operatorname{Con}(\mathbf{PA})$ is equivalent to $\varepsilon_0$ iterations of slow consistency. Our argument is proof-theoretic, while previous investigations of slow consistency relied on non-standard models of arithmetic.
\end{abstract}

\maketitle

{\let\thefootnote\relax\footnotetext{\copyright~2020. This manuscript version is made available under the CC-BY-NC-ND 4.0 license \url{http://creativecommons.org/licenses/by-nc-nd/4.0/}. This is the accepted version of a paper published in the Notre Dame Journal of Formal Logic \textbf{61} (2020), no.~1, 31-49.}}

\noindent Consider the finite consistency statement $\con(\mathbf T)\!\restriction\!x$, which expresses that every proof in the theory~$\mathbf T$ that has $\leq x$ symbols is not a proof of contradiction. If $\mathbf T$ is consistent and $n$ is a fixed numeral then $\con(\mathbf T)\!\restriction\!n$ can be established in a very weak theory, simply by checking the finitely many proofs of length at most~$n$ explicitly ($\Sigma_1$-completeness). Note that the number of potential proofs to be checked is exponential in $n$.

As a refinement of G\"odel's second incompleteness theorem, H.~Friedman (unpublished) and Pudl\'ak~\cite{pudlak-length-consistency} have made estimates on the length of the shortest proofs of the statements $\con(\mathbf T)\!\restriction\!n$ in the theory $\mathbf T$ itself. For a large class of theories they have shown that no proof of $\con(\mathbf T)\!\restriction\!n$ in $\mathbf T$ can have less than $n^\varepsilon$ symbols, for a suitable $\varepsilon>0$ (see Section~\ref{sect:preliminaries} for a more detailed definition of proof length).

In the present paper we are more interested in upper bounds: Pudl\'ak~\cite{pudlak-length-consistency} has shown that for theories $\mathbf T$ from a wide class,
\begin{equation*}
 \mathbf T\vdash\con(\mathbf T)\!\restriction\!n\qquad\text{with polynomial in $n$ proofs}.
\end{equation*}
Here and in the sequel we adopt the following terminology: For a family of sentences $\varphi_n$ we say that $\mathbf{T}\vdash\varphi_n$ with polynomial in $n$ proofs if there is a polynomial $p(x)$ such that for each $n$ there is a $\mathbf{T}$-proof of $\varphi_n$ with at most $p(n)$ symbols.

At the same time Pudl\'ak conjectures that $\mathbf T$ does not have polynomial proofs for the finite consistency of considerably stronger theories. A survey of related conjectures and many new results can be found in~\cite{pudlak-incompleteness-finite-domain}. Specifically~\cite[Problem~I]{pudlak-length-consistency} asks whether we have
\begin{equation*}
 \mathbf T\vdash\con(\mathbf T+\con(\mathbf T))\!\restriction\!n\qquad\text{with polynomial in $n$ proofs?}
\end{equation*}
The expected negative answer remains open. This is no surprise because the negative answer would imply $\mathbf{co\mbox{\,\bf -}NP}\neq\mathbf{NP}$ and hence $\mathbf{P}\neq\mathbf{NP}$, as pointed out in \cite[Proposition~6.2]{pudlak-length-consistency}.

In contrast with this there is an interesting new result of Hrube\v{s} (see \cite[Theorem~3.7]{pudlak-incompleteness-finite-domain}) which tests the boundaries of Pudl\'ak's conjecture: For any formula $\varphi$ which is consistent with a sequential and finitely axiomatized theory $\mathbf T$ there is a true $\Pi_1$-formula $\psi$ such that we have $\mathbf T+\varphi\nvdash\psi$ but
\begin{equation*}
 \mathbf T\vdash\con(\mathbf T+\psi)\!\restriction\!n\qquad\text{with polynomial in $n$ proofs}.
\end{equation*}
Hrube\v{s} constructs the formula $\psi$ by Rosser-style diagonalization.

The aim of the present paper is to exhibit a natural example of the same phenomenon: We show
\begin{equation}\label{eq:short-proofs-slow-consistency}
 \pa\vdash\con(\pa+\con^*(\pa))\!\restriction\!n\qquad\text{with polynomial in $n$ proofs},
\end{equation}
where $\con^*(\pa)$ is the slow consistency statement due to S.-D.~Friedman, Rathjen and Weiermann~\cite{FRW-slow-consistency}.

This statement is defined as
\begin{equation*} \label{con*def}
 \con^*(\pa)\,\equiv\,\forall_x(\feps(x)\!\downarrow\,\rightarrow\con(\isigma_x)).
\end{equation*}
Here $\feps$ denotes the function at stage $\varepsilon_0$ of the fast-growing hierarchy. It is well-known that Peano arithmetic does not prove the statement $\forall_x\feps(x)\!\downarrow$, i.e.~it does not prove that the computation of $\feps(x)$ terminates for any input $x$ (see Section~\ref{sect:preliminaries} for more details). Moreover $\varepsilon_0$ is the first level of the fast-growing hierarchy with this property.  As shown in \cite{FRW-slow-consistency} the usual consistency statement $\con(\pa)$ is unprovable in $\pa+\con^*(\pa)$, while $\con^*(\pa)$ is still unprovable in~$\pa$. Note that the latter is an instance of Gödel's second incompleteness theorem since $\con^*(\pa)$ is equivalent (over $\isigma_1$) to the standard consistency statement for $\pa$ given by an alternative enumeration process of axioms, where the axiom of $\Sigma_n$-induction appears only at the step $\feps(n)$.

The main technical result of the present paper is Theorem~\ref{thm:finite-consistency-conditional}, which states
\begin{equation}\label{eq:main-technical-result}
\isigma_1\vdash\forall_x(\feps(x)\!\downarrow\,\rightarrow\con(\pa+\con^*(\pa))\!\restriction\!x).
\end{equation}
To deduce (\ref{eq:short-proofs-slow-consistency}) it suffices to provide polynomial in $n$ proofs of the statements $\feps(n)\!\downarrow$. The latter reduce to suitable instances of arithmetical transfinite induction, which can be proved by Gentzen's classical construction. In Section~\ref{sect:short-proofs} we give a polynomial version of this construction, inspired by the remarkable similarity between Gentzen's proof of transfinite induction and the method of shortening cuts.

Let us point out that we will in fact prove a stronger version of (\ref{eq:main-technical-result}), with the slow uniform $\Pi_2$-reflection principle $\rfn^*_{\Pi_2}(\pa)$ at the place of slow consistency. As a consequence, one can strengthen $\con^*(\pa)$ to $\rfn^*_{\Pi_2}(\pa)$ in statement (\ref{eq:short-proofs-slow-consistency}) as well. This is remarkable because the $\Pi_2$-statement $\rfn^*_{\Pi_2}(\pa)$ extends Peano arithmetic by a new provably total function (see \cite[Section~3]{freund-proof-length}). In contrast, the above result of Hrube\v{s} only adds $\Pi_1$-axioms, which do not yield any new provably total functions.

We note that the usual reflection principle $\rfn_{\Pi_2}(\pa)$ is $\isigma_1$-provably equivalent to $\forall x\,\feps(x)\!\downarrow$. There is a similar characterization for $\rfn^*_{\Pi_2}(\pa)$ in terms of the totality of a function $\feps^*$ that has intermediate rate of growth: It eventually dominates any function $F_{\alpha}$ with $\alpha<\varepsilon_0$, but is considerably slower than $\feps$ (see Section~\ref{sect:preliminaries} for more details).

 Proposition~\ref{prop:usual-consistency-false} shows that (\ref{eq:main-technical-result}) becomes false if one replaces $\con^*(\pa)$ by the usual consistency statement $\con(\pa)$. This means that Pudl\'ak's conjecture itself cannot be refuted by our approach.

Finally we remark that statement (\ref{eq:main-technical-result}) yields new proofs for several known results about slow consistency: In Corollary~\ref{cor:slow-doesnt-imply-usual} we deduce
\begin{equation*}
 \pa+\con^*(\pa)\nvdash\con(\pa),
\end{equation*}
as originally shown by S.-D.~Friedman, Rathjen and Weiermann~\cite{FRW-slow-consistency}.

Corollary~\ref{cor:eps-iterations-slow-consistency} recovers an even stronger result, first established in \cite{freund-slow-reflection,henk-pakhomov}: The usual consistency statement $\con(\pa)$ is equivalent to $\varepsilon_0$ iterations of slow consistency. The new proofs of these results are considerably different from those known so far: They only use proof-theoretic methods, whereas non-standard models of arithmetic were central to all previous arguments. Specifically, our proof of (\ref{eq:main-technical-result}) is based on Buchholz and Wainer's~\cite{buchholz-wainer-87} computational analysis of Peano arithmetic via an infinitary proof system. This is inspired by the appendix of~\cite{FRW-slow-consistency}, which contains a similar result to (\ref{eq:main-technical-result}), but without the occurrence of slow consistency. To formalize arguments about infinite proofs in the meta theory $\isigma_1$ we use the remarkably elegant notation systems of Buchholz~\cite{buchholz91}. More information on this approach can be found in Section~\ref{sect:proof-theory}.

\section{Preliminaries}\label{sect:preliminaries}

We begin this section with some general remarks about the length of proofs: Following Pudl\'ak~\cite{pudlak-length-consistency,pudlak-incompleteness-finite-domain} we count the total number of symbols in a proof, rather than just the number of inferences. In other words, the length of a proof is essentially the number of digits in its G\"odel code. We will be most interested in proofs in Peano arithmetic, which we take to be axiomatized as usual. Let us also agree to work with Hilbert-style proofs in sequence form, even if this choice does not appear to be essential: Kraj\'i\v{c}ek has given a polynomial simulation of sequence-proofs by tree-proofs while Eder has shown that Hilbert-style proofs and sequent calculus proofs with cuts are polynomially equivalent (see \cite{pudlak-length-98} for both results).

As is common in the literature on proof length we use the binary version of the numerals $\overline{n}$,
$$\overline{0}=0,\;\;\;\; \overline{2n+2}=\overline{n+1}(1+1),\;\;\;\; \overline{2n+1}=\overline{n}(1+1)+1.$$ It is easy to observe that the length of $\overline{n}$ is $\mathcal{O}(\lg(n))$. Thus the trivial lower bound for the length of a proof of the formula $\varphi(\overline n)$ is logarithmic rather than linear. As the present paper is only concerned with upper bounds that are polynomial, without any significant changes we could work with the usual unary numerals. If there is no danger of misunderstanding the $n$-th numeral will be denoted by $n$ rather than~$\overline n$.

Our next aim is to give a precise definition of slow consistency: Recall that the proof-theoretic ordinal of Peano arithmetic is the first fixed-point $\varepsilon_0=\omega^{\varepsilon_0}$ of ordinal exponentiation. To each limit ordinal $\alpha<\varepsilon_0$ one assigns a fundamental sequence $n\mapsto\{\alpha\}(n)$ with $\alpha=\sup_{n\in\omega}\{\alpha\}(n)$: Write $\alpha=\beta+\omega^\gamma$, where $0<\gamma<\alpha$ is the smallest exponent in the Cantor normal form of $\alpha$, and put
\begin{align*}
 \{\beta+\omega^\gamma\}(n)&=\beta+\omega^{\{\gamma\}(n)}&&\text{if $\gamma$ is a limit},\\
 \{\beta+\omega^\gamma\}(n)&=\beta+\omega^\delta\cdot (n+1)&&\text{if $\gamma=\delta+1$.}
\end{align*}
To extend the notion to $\varepsilon_0$ itself, consider the operation $(n,\alpha)\mapsto\omega^\alpha_n$ recursively defined by $\omega^\alpha_0=\alpha$ and $\omega^\alpha_{n+1}=\omega^{\omega^\alpha_n}$. Writing $\omega_n$ for $\omega^1_n$ we put
\begin{equation*}
 \{\varepsilon_0\}(n)=\omega_{n+1}.
\end{equation*}
It will later be convenient to have fundamental sequences for zero and successor ordinals: Here we put $\{0\}(n)=0$ and $\{\alpha+1\}(n)=\alpha$. Now we can define the fast-growing hierarchy of functions $F_\alpha:\mathbb N\rightarrow\mathbb N$ by recursion over $\alpha\leq\varepsilon_0$, setting
\begin{align*}
 F_0(n)&:=n+1,\\
 F_{\alpha+1}(n)&:=F_\alpha^{n+1}(n),\\
 F_\lambda(n)&:=F_{\{\lambda\}(n)}(n)\qquad\text{for $\lambda$ limit}.
\end{align*}
Note that the superscript in the second line denotes iterations.

It is well known that the ordinals below $\varepsilon_0$ can be arithmetized via the notion of Cantor normal form. We adopt the particularly efficient coding due to Sommer~\cite{sommer95}. This is convenient because Sommer gives a $\Delta_0$-formula $F_\alpha(x)=y$ with free variables $\alpha,x,y$ which uniformly defines the graphs of the functions in the fast-growing hierarchy. Basic properties of these functions (but not their totality) are then provable in $\isigma_1$. Strictly speaking, Sommer only considers ordinals $\alpha<\varepsilon_0$. The case $\alpha=\varepsilon_0$ has been treated in \cite[Section~2]{freund-proof-length}. We write $F_\alpha(x)\!\downarrow$ and~$F_\alpha\!\downarrow$ to abbreviate the formulas $\exists_y\,F_\alpha(x)=y$ resp.~$\forall_x\exists_y\,F_\alpha(x)=y$.

To complete the definition of slow consistency, recall that $\isigma_n$ is the fragment of Peano arithmetic which restricts induction to $\Sigma_n$-formulas. It is standard to define these fragments inside Peano arithmetic. Using the formalization  $\operatorname{Pr}_{\mathbf T}(\varphi)$ of provability in r.e.~theories $\mathbf{T}$ (see Feferman's paper \cite{feferman-arithmetization}) in the usual way we obtain a formula
\begin{equation*}
 \con(\isigma_x)\,\equiv\,\neg\operatorname{Pr}_{\isigma_x}(0=1)
\end{equation*}
with free variable $x$, which states that the $x$-th fragment is consistent. The defining equivalence of slow consistency, namely
\begin{equation*}
 \con^*(\pa)\,\equiv\,\forall_x(\feps(x)\!\downarrow\,\rightarrow\con(\isigma_x)),
\end{equation*}
is now fully explained. As we already mentioned in the introduction one could equivalently define the notion of slow consistency as the standard formalization of the notion of consistency for an alternative (slowed-down) enumeration of the axioms of $\pa$.

To avoid confusion we point out that \cite{freund-slow-reflection,henk-pakhomov} also considered different ``shifted'' versions of slow consistency. Namely, statements of the form
$$\forall_x(\feps(x)\!\downarrow\,\rightarrow\con(\isigma_{\max(x+s,1)})),$$ for some $s\in \mathbb{Z}$. We conjecture that our results could be generalized to cases of $s\le 1$. Nevertheless, for the sake of simplicity, we prefer to work with \mbox{S.-D.~Friedman}, Rathjen and Weiermann's~\cite{FRW-slow-consistency} original version $\con^*(\pa)$ in the present paper.  Note that since the analogue of Corollary~\ref{cor:eps-iterations-slow-consistency}  doesn't hold for $s>1$ (see \cite{henk-pakhomov,freund-slow-reflection}), our results cannot be generalized to these cases.

As in \cite{freund-slow-reflection} our analysis of slow consistency will make crucial use of the notion of slow uniform $\Pi_2$-reflection. Up to the aforementioned index shift this notion is defined as
\begin{equation*}
\rfn^*_{\Pi_2}(\pa)\,\equiv\,\forall_x(\feps(x)\!\downarrow\rightarrow\rfn_{\Pi_2}(\isigma_x)),
\end{equation*}
where $\rfn_{\Pi_2}(\isigma_x)$ refers to uniform $\Pi_2$-reflection (equivalently $\Sigma_1$-reflection) over the fragment~$\isigma_x$. Using a truth predicate $\operatorname{True}_{\Pi_2}(\cdot)$ for $\Pi_2$-formulas the latter can be given as a single formula, namely
\begin{equation*}
 \rfn_{\Pi_2}(\isigma_x)\,\equiv\,\forall_\varphi\,(\text{``$\varphi$ a $\Pi_2$-formula''}\land\operatorname{Pr}_{\isigma_x}(\varphi)\rightarrow\operatorname{True}_{\Pi_2}(\varphi)).
\end{equation*}
Note that we have
\begin{equation*}
 \isigma_1\vdash\rfn^*_{\Pi_2}(\pa)\rightarrow\con^*(\pa),
\end{equation*}
since $\rfn_{\Pi_2}(\isigma_x)$ implies $\con(\isigma_x)$ for each $x$.  In fact, slow $\Pi_2$-reflection implies that iterations of slow consistency are progressive (i.e., it provides the induction step to establish iterations of slow consistency). As for slow consistency one could show that uniform slow reflection is equivalent to uniform reflection for a slowed-down axiomatization of $\pa$. It is also interesting to observe that slow consistency and slow reflection can be derived from a common notion of slow proof (see e.g.~\cite[Section~2]{freund-proof-length}).

To conclude this section, let us recall the computational analysis of slow reflection from \cite[Section~3]{freund-proof-length}: It is well-known that $\rfn_{\Pi_2}(\isigma_x)$ is $\isigma_1$-provably equivalent to the totality of the function $F_{\omega_x}$ from the fast-growing hierarchy (see the model-theoretic argument in \cite{paris80}, the proof-theoretic argument in \cite{beklemishev03}, and the explicitly uniform proof in \cite{freund-characterize-reflection}). As in \cite[Corollary~3.2]{freund-proof-length} we thus have
\begin{equation*}
\isigma_1\vdash\rfn^*_{\Pi_2}(\pa)\leftrightarrow\forall_x(\feps(x)\!\downarrow\,\rightarrow F_{\omega_x}\!\downarrow).
\end{equation*}
Now consider the function
\begin{equation*}
\feps^*(x):=F_{\omega_{y}}(x)\quad\text{with}\quad y:=\feps^{-1}(x):=\max(\{z\leq x\,|\,\feps(z)\leq x\}\cup\{0\}).
\end{equation*}
According to \cite[Definition~3.3]{freund-proof-length} the graphs of $\feps^{-1}$ and $\feps^*$ are $\Delta_0$-definable. The function $\feps^{-1}$ is a monotone, non-stabilizing, and extemely slow growing function since it is an inverse of the total fast-growing function  $\feps$. It is easy to see that $\feps^{-1}$ is $\isigma_1$-provably total, although the fact that the values of $\feps^{-1}$ don't stabilize isn't provable even in $\pa$. The function $\feps^*$ dominates any provably total function of Peano arithmetic. But at the same time it is much slower than the usual function~$\feps$: The latter dominates any provably total function of the theory $\pa+\feps^*\!\downarrow$, as shown in~\cite[Theorem~3.10]{freund-proof-length}. Following \cite[Proposition~3.4]{freund-proof-length} the above equivalence can be transformed into
\begin{equation*}
\isigma_1\vdash\rfn^*_{\Pi_2}(\pa)\leftrightarrow\feps^*\!\downarrow.
\end{equation*}
It is the statement $\feps^*\!\downarrow$ (rather than $\rfn^*_{\Pi_2}(\pa)$ or $\con^*(\pa)$) that will be susceptive to proof-theoretic analysis.

\section{A Proof-Theoretic Approach to Slow Consistency}\label{sect:proof-theory}

In this section we analyse the theory $\pa+\feps^*\!\downarrow$ (see the previous section) by proof-theoretic means. Our approach is inspired by the appendix of~\cite{FRW-slow-consistency}, which is concerned with the finite consistency of $\pa$ rather than $\pa+\feps^*\!\downarrow$. As it turns out, the function $\feps^*$ is sufficiently slower than $\feps$ to be accomodated. As in \cite{FRW-slow-consistency} we use the infinitary proof system introduced by Buchholz and Wainer \cite{buchholz-wainer-87} in their analysis of the provably total functions of Peano arithmetic. As usual in ordinal analysis this system deduces sequents, i.e.~finite sets $\Gamma=\{\varphi_1,\dots,\varphi_n\}$ of formulas which are interpreted as the disjunction $\bigvee\Gamma\equiv\varphi_1\lor\dots\lor\varphi_n$. In particular, the empty sequent $\langle\rangle$ is the canonical way to express a contradiction. We write $\Gamma,\varphi$ rather than $\Gamma\cup\{\varphi\}$ for the extension of a sequent by a formula. A typical feature of ordinal analysis is the $\omega$-rule, which allows to deduce $\Gamma,\forall_x\varphi(x)$ if we have proofs of $\Gamma,\varphi(n)$ for each numeral. Note that this allows us to work with closed formulas only. In the presence of the $\omega$-rule, proofs are infinite trees. There are several ways to work with these trees in a finitistic meta theory. A particularly elegant approach is due to Buchholz~\cite{buchholz91}: The idea is to denote infinite proofs by finite terms which describe their role in the cut elimination process. For example, if $h$ denotes an infinite proof then we have another term $\mathcal E h$ which denotes the result of cut elimination. It turns out that all relevant operations on infinite proofs can be recast as primitive recursive functions on these finite terms. In particular, there is a primitive recursive function $\mathfrak s$ which computes the immediate subproofs. If the last rule of $h$ (also to be read off by a primitive recursive function) is not a cut then we have $\mathfrak s_n(\mathcal E h)=\mathcal E\mathfrak s_n(h)$. Intuitively this means that the $n$-th subproof of $\mathfrak E h$ is the result of cut elimination in the $n$-th subproof of $h$. Officially, the given equation is simply a clause in the definition of~$\mathfrak s$ by recursion over terms. In the same way one defines primitive recursive functions which read off the end-sequent, the ordinal height and the cut rank of (the infinite derivation denoted by) a given term. We say that $h$ codes an infinite proof $\vdash^\alpha_d\Gamma$ if it has ordinal height $\alpha$, cut rank $d$ and end-sequent $\Gamma$. By induction over terms one can show that proofs are ``locally correct''. For example, the theory $\isigma_1$ proves that the ordinal height of $\mathfrak s_n(h)$ is smaller than the ordinal height of $h$, provided~that the last rule of $h$ is not an axiom. It also proves that the end-sequents of the immediate subtrees $\mathfrak s_n(h)$ provide the assumptions required by the last rule of~$h$. In \cite{freund-characterize-reflection} we use Buchholz' approach to give a detailed formalization of the proof system from \cite{buchholz-wainer-87} in $\isigma_1$.

The proof system from \cite{buchholz-wainer-87} is special in that it keeps track of computational information about the derived sequents. For this purpose, the language of first order arithmetic is extended by a new predicate $\cdot\in N$. The proof system contains an axiom $0\in N$ and a rule which allows to infer $n+1\in N$ from $n\in N$. Thus we can deduce $n\in N$ for any numeral $n$, which suggests to interpret $N$ as the set of all natural numbers. The interpretation of $N$ becomes more interesting if we look at (essentially) cut free proofs of $\Sigma_1$-formulas: Now the idea is to interpret~$N$ as a finite approximation to the natural numbers, which bounds the size of existential witnesses. More precisely, a $\Sigma^N$-formula is built from arithmetical prime formulas and prime formulas $n\in N$ by the connectives $\land,\lor$ and ``relativized'' existential quantifiers $\exists_{y\in N}$ (as usual, we write $\exists_{y\in N}\varphi$ and $\forall_{y\in N}\varphi$ to abbreviate $\exists_y(y\in N\land\varphi)$ resp.~$\forall_y(y\notin N\lor\varphi)$). Such a formula is called true in~$K$ if it is true under the interpretation $N=\{m\,|\,3m<K\}$. As all quantifiers are relativized, truth in $K$ is primitive recursively decidable. By a $\Sigma^N$-sequent we mean a sequent that consist of $\Sigma^N$-formulas and prime formulas of the form $n\notin N$ (but the latter may not appear as subformulas of compound formulas). Such a sequent is called true in $K$ if it contains a $\Sigma^N$-formula which is true in $K$. Otherwise it is false in~$K$. The point of the formulas $n\notin N$ in a $\Sigma^N$-sequent $\Gamma$ is that they determine an ``input'', namely
\begin{equation*}
 k(\Gamma):=\max(\{2\}\cup\{3n\,|\,\text{the formula $n\notin N$ occurs in $\Gamma$}\}).
\end{equation*}
We say that the $\Sigma^N$-sequent $\Gamma$ is bounded by $F:\mathbb N\rightarrow\mathbb N$ if this function transforms the input $k(\Gamma)$ into a correct output, i.e.~if $\Gamma$ is true in $F(k(\Gamma))$. For example, the $\Sigma^N$-sequent $n\notin N,\exists_{y\in N}\,R(n,y)$ is bounded by $F$ if we have $R(n,m)$ for some number $m$ with $3m<F(\max\{2,3n\})$. In this terminology \cite[Lemma~5]{buchholz-wainer-87} reads as follows: If we have $\vdash^\alpha_0\Gamma$ where $\Gamma$ is a $\Sigma^N$-sequent then $\Gamma$ is bounded by the function $F_\alpha$ from the fast-growing hierarchy. To make this computational interpretation sound, one needs rather restrictive ordinal assignments. In particular, the ordinal ``height" assigned to a proof is related but not quite equal to the usual height of the underlying tree. From $\vdash^\alpha\Gamma$ and $\beta>\alpha$ we cannot in general infer $\vdash^\beta\Gamma$. This is only allowed if we have $\alpha<_k \beta$ for~$k=k(\Gamma)$. The latter means that one can descend from $\beta$ to $\alpha$ via $k$-th members of the fundamental sequence, i.e.~that there is a sequence $\beta=\gamma_0,\dots,\gamma_n=\alpha$ with $\gamma_{i+1}=\{\gamma_i\}(k)$. Note that the same relation is often denoted by $\beta\rightarrow_k\alpha$. The rule which allows to infer $\vdash^\beta\Gamma$ from $\vdash^\alpha\Gamma$ and $\alpha<_{k(\Gamma)}\beta$ is called accumulation. We refer to \cite[Section~3]{buchholz-wainer-87} for the full details of the infinite proof system. Also, the reader can find a complete list of rules as part of the case distinction in the proof of Proposition~\ref{prop:feps-cons-infinite} below.

To analyse the theory $\pa+\feps^*\!\downarrow$ we extend the proof system from \cite{buchholz-wainer-87} by the new axioms
\begin{equation*}
n\notin N,\exists_{y\in N}\feps^*(n)=y,
\end{equation*}
for any numeral $n$ and with arbitrary side formulas. More precisely, in the language of \cite{buchholz-wainer-87} the $\Delta_0$-formula $\feps^*(x)=y$ is represented by the corresponding elementary relation symbol. This ensures that $\exists_{y\in N}\feps^*(n)=y$ is a $\Sigma^N$-formula. We write $*\!\vdash^\alpha_d\Gamma$ to denote infinite proofs in the extended system. Starting with the new axioms, two applications of $\lor$-introduction yield $*\!\vdash^2_0 n\notin N\lor\exists_{y\in N}\feps^*(n)=y$. By the $\omega$-rule we infer $*\!\vdash^3_0\forall_{x\in N}\exists_{y\in N}\feps^*(x)=y$. In view of $3<_2\omega$ accumulation gives $*\!\vdash^\omega_0\forall_{x\in N}\exists_{y\in N}\feps^*(x)=y$, which means that the axiom $\feps^*\!\downarrow\,\equiv\forall_x\exists_y\feps^*(x)=y$ of the theory $\pa+\feps^*\!\downarrow$ can be embedded in the sense of \cite{buchholz-wainer-87}. In the presence of the new axioms, we cannot eliminate cuts with cut formula $\exists_{y\in N}\feps^*(n)=y$ resp.~$\forall_{y\in N}\feps^*(n)\neq y$. Technically, the simplest solution is to stipulate that the formula $\exists_{y\in N}\feps^*(n)=y$ and its negation have ``length'' $0$ (cf.~\cite[Definition~5]{buchholz-wainer-87}). Then the cut elimination result in \cite[Theorem~4]{buchholz-wainer-87} remains valid, i.e.~we can remove all cut formulas of length $>0$. Let us stress that a proof of cut rank $0$ may now contain cuts over formulas $n\in N$ and $\exists_{y\in N}\feps^*(n)=y$ (and their negations). As we will see, these cuts do not obstruct the computational interpretation sketched in the previous paragraph. We point out that the proof of \cite[Proposition~3.9]{freund-proof-length} embeds the axiom $\feps^*\!\downarrow$ in a different way: This allows for full cut elimination but it relies on the totality of~$\feps^*$, which is not available in a weak meta theory. We can now state the main technical result of this section:

\begin{proposition}\label{prop:feps-cons-infinite}
For each primitive recursive function $f$ there is an $n\in\mathbb N$ such that we have
\begin{equation*}
\isigma_1\vdash\forall_{x\geq n}(\feps(x+2)\!\downarrow\,\rightarrow*\!\nvdash^{\omega\cdot f(x)}_x\langle\rangle).
\end{equation*}
\end{proposition}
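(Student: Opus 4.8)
The plan is to prove the contrapositive: assuming $\feps(x+2)\!\downarrow$, I want to bound the ordinal height of any proof of the empty sequent in the extended system $*\!\vdash$, and thereby derive a contradiction with the computational soundness of cut-free proofs. First I would apply the cut-elimination result cited from \cite[Theorem~4]{buchholz-wainer-87} (which survives in our setting since $\exists_{y\in N}\feps^*(n)=y$ and its negation are given length $0$): if $*\!\vdash^{\omega\cdot f(x)}_x\langle\rangle$, then by iterated exponentiation we obtain $*\!\vdash^{\beta}_0\langle\rangle$ for some ordinal $\beta$ built by stacking $\omega$'s over $\omega\cdot f(x)$ roughly $x$ times — so $\beta<_2 \omega_{x+c}$ for a fixed constant $c$ coming from the number of $\omega$-exponentiations needed to reduce cut rank $x$ to $0$, together with the accumulation bookkeeping. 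The cut-elimination procedure and the resulting ordinal bound must be verified to be $\isigma_1$-provable, which is exactly the content of the formalization via Buchholz' term notations recalled in Section~\ref{sect:proof-theory}.

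Next, the empty sequent $\langle\rangle$ is trivially a $\Sigma^N$-sequent with input $k(\langle\rangle)=2$, and it is false in every $K$. But \cite[Lemma~5]{buchholz-wainer-87} — or rather its extension to the $*$-system, which is the real work — says that a cut-free $\Sigma^N$-proof $*\!\vdash^\beta_0\Gamma$ yields a bound on $\Gamma$ by $F_\beta$. The subtlety flagged in the text is that cut rank $0$ proofs in the extended system may still contain cuts over $n\in N$ and over $\exists_{y\in N}\feps^*(n)=y$; I would handle the former exactly as in \cite{buchholz-wainer-87} and the latter by observing that such a cut, read computationally, only requires evaluating $\feps^*$ at some argument bounded by the current approximation $N$, and $\feps^*$ is $\Delta_0$-definable and dominated by $\feps$ on the relevant range — so the bounding function is perturbed by at most an application of $\feps$, which is where the shift from $x$ to $x+2$ in $\feps(x+2)\!\downarrow$ enters. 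Thus, under the hypothesis $\feps(x+2)\!\downarrow$ (hence $F_{\omega_{x+c}}\!\downarrow$, since $\feps=F_{\varepsilon_0}$ dominates each $F_{\omega_j}$, and $\omega_{x+c}<_2\varepsilon_0$-approximants are below it for $x$ large), the function $F_\beta$ is total on input $2$, so $\langle\rangle$ would be bounded by $F_\beta(2)$, i.e.\ true in $F_\beta(2)$ — contradicting that $\langle\rangle$ is false everywhere.

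The choice of $n$ is then forced: we need $x\geq n$ large enough that the fixed constant $c$ (depending only on the length-$0$ convention and the cut-elimination overhead, not on $f$) makes $\omega_{x+c}$ comparable with $\feps$'s growth in the sense required, and that the arithmetic $\omega\cdot f(x)\mapsto\omega_{x+c}$-style estimate goes through; $n$ depends on $f$ only through this elementary comparison of $\omega\cdot f(x)$ with a tower of height $x$, which dominates $f$ eventually. Everything is arranged so that the quantifier $\forall_{x\geq n}$ and the implication can be proved uniformly in $\isigma_1$, using that all the proof-theoretic operations (cut elimination, reading off end-sequents and heights, the bounding lemma) are primitive recursive and locally correct, as set up in \cite{freund-characterize-reflection}.

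The main obstacle I expect is the extension of the Buchholz–Wainer bounding lemma (\cite[Lemma~5]{buchholz-wainer-87}) to the $*$-system in a way that is provable in $\isigma_1$: one must carefully track how a cut over $\exists_{y\in N}\feps^*(n)=y$ affects the bounding function, show the perturbation is absorbed by a single $\feps$, and confirm that the restrictive ordinal assignments (accumulation, the $<_k$-relation) are preserved throughout cut elimination in the presence of the new axioms. The arithmetical ordinal estimates — pinning down the constant $c$ and verifying $\omega\cdot f(x)$ stacks to below $\{\varepsilon_0\}(x+2)=\omega_{x+3}$ — are routine by comparison but must be stated precisely enough to be formalized.
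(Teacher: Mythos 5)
Your overall architecture coincides with the paper's: eliminate cuts of positive length to get $*\!\vdash^{\beta}_0\langle\rangle$ with $\beta=\omega_x^{\omega\cdot f(x)}$, then refute this by a Buchholz--Wainer-style bounding argument applied to the empty sequent, using $\feps(x+2)\!\downarrow$ to supply the needed definedness. But you explicitly defer the entire content of the proposition to ``the extension of Lemma~5 to the $*$-system'', and the mechanism you sketch for that extension would not work as stated. First, the perturbation caused by a cut over $\exists_{y\in N}\feps^*(m)=y$ is not ``absorbed by a single application of $\feps$'': the arguments $m$ arising during the descent can be astronomically large (up to roughly $\feps(x+2)$ itself), and we have no access to $\feps$ at such arguments -- the hypothesis only gives the single value $\feps(x+2)$. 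Likewise your inference from $\feps(x+2)\!\downarrow$ to ``$F_{\omega_{x+c}}\!\downarrow$'' is false: definedness of one value of $F_{\omega_{x+3}}$ does not yield totality of any $F_{\omega_j}$. The correct mechanism, which is the heart of the proof, is this: every input $m$ fed to $\feps^*$ satisfies $3m\leq k(\Gamma)<K_0\leq\feps(x+2)$, hence $\feps^{-1}(m)<x+2$, hence $\feps^*(m)=F_{\omega_{\feps^{-1}(m)}}(m)\leq F_{\omega_{x+1}}(k(\Gamma))$; so the bounding function can be shifted from $F_\alpha$ to $F_{\omega_{x+2}+\alpha}$, which absorbs the new axioms. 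The definedness of the required values $F_{\omega_{x+2}+\alpha}(k)$ is then extracted from the single value $\feps(x+2)=F_{\omega_{x+3}}(x+2)$ via the step-down relation $<_k$ and the meshing of $\omega_{x+2}$ with $\alpha\leq\omega_x^{\omega\cdot f(x)}$; this is also where the choice of $n$ with $\isigma_1\vdash f(x)\leq F_n(x)$ enters, to get $\omega_{x+2}\cdot 2>_{F_n(x+2)}\omega_{x+2}+\omega_x^{\omega\cdot f(x)}$.

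A second genuine gap is the $\isigma_1$-formalization of the bounding lemma itself. Its usual proof is by transfinite induction on the proof height up to $\varepsilon_0$, which is not available in $\isigma_1$, and appealing to Buchholz' term notations only handles the coding of infinite proofs, not this induction. The paper's device is to replace the induction by an explicit primitive recursive descent through the proof tree: one constructs pairs $\langle h_i,K_i\rangle$ maintaining the invariant that the end-sequent $\Gamma_i$ of $h_i$ is \emph{false} in $K_i=F_{\omega_{x+2}+\alpha_i}(k(\Gamma_i))$, with $\alpha_{i+1}<_{k(\Gamma_i)}\alpha_i$ and $K_{i+1}\leq K_i$, and then derives the contradiction from the $\isigma_1$-provable fact that a $<_k$-descending sequence with eventually constant step-down parameter must terminate. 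Without some such replacement your appeal to the bounding lemma is not available in the stated metatheory.
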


\begin{proof}
By meta induction on the definition of $f$ we get $\isigma_1\vdash\forall_{x>0}\, f(x)\leq F_n(x)$ for some $n>0$. To see that the theorem holds for this number, consider an arbitrary $x\geq n$ with $\feps(x+2)\!\downarrow$. Aiming at a contradiction, assume that we have~$*\!\vdash^{\omega\cdot f(x)}_x\langle\rangle$. By cut elimination (see \cite[Theorem~4]{buchholz-wainer-87}, as well as~\cite{buchholz91} for the formalization in $\isigma_1$) we get~$*\!\vdash^{\omega_x^{\omega\cdot f(x)}}_0\langle\rangle$. Recall that this proof may still contain cuts over prime formulas and formulas~$\exists_{y\in N}\,\feps^*(n)=y$. By primitive recursion we will define a sequence of pairs $\langle h_i,K_i\rangle$ with the following properties:
\begin{enumerate}
 \item[(i)] The number $h_i$ codes an infinite proof $*\!\vdash^{\alpha_i}_0\Gamma_i$, where $\Gamma_i$ is a $\Sigma^N$-sequent.
 \item[(iI)] We have $F_{\omega_{x+2}+\alpha_i}(k(\Gamma_i))=K_i$, and $\Gamma_i$ is false in $K_i$.
 \item[(iii)] We have $\alpha_{i+1}<_{k(\Gamma_i)}\alpha_i$ and $K_{i+1}\leq K_i$. If $K_{i+1}=K_i$ then $\Gamma_{i+1}=\Gamma_i$.
\end{enumerate}
Given such a sequence, the desired contradiction can be deduced as in the appendix of \cite{FRW-slow-consistency}: The inequality $K_{i+1}<K_i$ can only hold finitely often, say, only for~$i<M$ (to see this, assume the contrary and argue by $\Sigma_1$-induction to deduce a contradiction). By condition (iii) we get $\Gamma_M=\Gamma_{M+1}=\dots$ and thus
\begin{equation*}
 \alpha_M>_{k(\Gamma_M)}\alpha_{M+1}>_{k(\Gamma_M)}\dots
\end{equation*}
This is impossible, since $\isigma_1$ proves that any descending sequence of ordinals below $\varepsilon_0$ (and in fact some way above) with constant step-down must terminate (see \cite[Proposition~2.9]{ketonen-solovay} or the more explicit \cite[Lemma~4.7]{freund-characterize-reflection}). It remains to construct a primitive recursive sequence $\langle h_i,K_i\rangle$ which satisfies conditions~(i-iii). For $h_0$ we take the proof $*\!\vdash^{\omega_x^{\omega\cdot f(x)}}_0\langle\rangle$ from above. Before we can set $K_0:=F_{\omega_{x+2}+\omega_x^{\omega\cdot f(x)}}(2)$ we must show that this value is defined. By \cite[Section~2]{FRW-slow-consistency} we have
\begin{equation*}
 \omega_{x+3}=\omega^{\omega_{x+2}}>_{x+2}\omega^{\omega_{x+1}+1}>_{x+2}\omega^{\omega_{x+1}}\cdot 3>_{x+2}\omega_{x+2}\cdot 2+1.
\end{equation*}
By assumption $\feps(x+2)=F_{\omega_{x+3}}(x+2)$ is defined. Using \cite[Lemma~2.3]{FRW-slow-consistency} we conclude that $F_{\omega_{x+2}\cdot 2+1}(x+2)$ is defined as well. In view of \cite[Theorem~5.3]{sommer95} this is equivalent to saying that $F^{x+3}_{\omega_{x+2}\cdot 2}(x+2)$ is defined. In particular $F^2_{\omega_{x+2}\cdot 2}(x+2)$ is defined. By \cite[Proposition~5.4]{sommer95} we have $F_n(x+2)\leq F_{\omega_{x+2}\cdot 2}(x+2)$, so that $F_{\omega_{x+2}\cdot 2}(F_n(x+2))$ is defined as well. For later use, let us record
\begin{equation*}
 F_{\omega_{x+2}\cdot 2}(F_n(x+2))\leq F^2_{\omega_{x+2}\cdot 2}(x+2)\leq F^{x+3}_{\omega_{x+2}\cdot 2}(x+2)\leq\feps(x+2).
\end{equation*}
From $f(x)\leq F_n(x)\leq F_n(x+2)$ we infer $\omega^\omega>_{F_n(x+2)}\omega\cdot f(x)$, which in turn implies $\omega_{x+2}\cdot 2>_{F_n(x+2)}\omega_{x+2}+\omega_x^{\omega\cdot f(x)}$. Then \cite[Lemma~2.3]{FRW-slow-consistency} tells us that $F_{\omega_{x+2}+\omega_x^{\omega\cdot f(x)}}(F_n(x+2))$ is defined. In view of $2\leq F_n(x+2)$ we learn that the value $F_{\omega_{x+2}+\omega_x^{\omega\cdot f(x)}}(2)$ is defined, as desired. Let us also record
\begin{equation*}
 K_0=F_{\omega_{x+2}+\omega_x^{\omega\cdot f(x)}}(2)\leq F_{\omega_{x+2}\cdot 2}(F_n(x+2))\leq\feps(x+2).
\end{equation*}
Assuming that $\langle h_i,K_i\rangle$ is already constructed, we now define $\langle h_{i+1},K_{i+1}\rangle$ by case distinction on the last rule of the proof $h_i$ (which can be read off by a primitive recursive function, as explained above). Note that this cannot be the $\omega$-rule, as we assume that $\Gamma_i$ is a $\Sigma^N$-sequent.

\emph{Case.}  The last rule is an axiom of \cite{buchholz-wainer-87}. Then $\Gamma_i$ contains a true arithmetical prime formula, or the formula $0\in N$, or the formulas $m\notin N,m\in N$ for some numeral $m$. It is easy to see that each of these possibilities makes $\Gamma_i$ true in $K_i$, contradicting~(ii). Thus the last rule cannot, in fact, be an axiom of \cite{buchholz-wainer-87}.

\emph{Case.} The last rule is an axiom $m\notin N,\exists_{y\in N}\,\feps^*(m)=y$. Again, we show that this is impossible because it makes $\Gamma_i$ true in $K_i$: As $m\notin N$ occurs in $\Gamma_i$ we have
\begin{equation*}
 m\leq 3m\leq k(\Gamma_i)<K_i\leq K_0\leq\feps(x+2),
\end{equation*}
and thus $\feps^{-1}(m)<x+2$. Using \cite[Proposition~5.4]{sommer95} one infers
\begin{multline*}
 3\cdot\feps^*(m)=3\cdot F_{\omega_{\feps^{-1}(m)}}(m)\leq 3\cdot F_{\omega_{x+1}}(k(\Gamma_i))\leq F_{\omega_{x+1}}(k(\Gamma_i))^2<\\
<F_{\omega_{x+1}}^2(k(\Gamma_i))\leq F_{\omega_{x+1}+1}(k(\Gamma_i))\leq F_{\omega_{x+2}}(k(\Gamma_i))\leq F_{\omega_{x+2}+\alpha_i}(k(\Gamma_i))=K_i.
\end{multline*}
In particular all involved expressions are defined. This means that $\exists_{y\in N}\feps^*(m)=y$ is true in $K_i$, as promised.

\emph{Case.} Assume that $h_i$ ends with an (N)-rule (cf.~\cite{buchholz-wainer-87}). This means that $\alpha_i$ is of the form $\alpha_{i+1}+1$, that $\Gamma_i$ contains a formula $Sm\in N$, and that $h_i$ has an immediate subproof $*\,\vdash^{\alpha_{i+1}}_0\Gamma_i,m\in N$. More precisely, the sequent $\Gamma_i$ is of the form $\Gamma_i=\Gamma,Sm\in N$ and the immediate subproof has end-sequent $\Gamma,m\in N$. Here $\Gamma$ may or may not contain the formula $Sm\in N$. If it does not we may add this formula by weakening (see the remark after \cite[Definition~4]{buchholz-wainer-87}), to get the desired proof of $\Gamma,m\in N,Sm\in N=\Gamma_i,m\in N$. Let $h_{i+1}$ be (the code of) this proof $*\,\vdash^{\alpha_{i+1}}_0\Gamma_i,m\in N$. In particular we have $\Gamma_{i+1}=\Gamma_i,m\in N$. This implies $k(\Gamma_{i+1})=k(\Gamma_i)$ and thus
\begin{equation*}
 K_{i+1}:=F_{\omega_{x+2}+\alpha_{i+1}}(k(\Gamma_{i+1}))<F_{\omega_{x+2}+\alpha_i}(k(\Gamma_i))=K_i.
\end{equation*}
Note that $K_{i+1}$ can be computed by bounded minimization (i.e.\ as the smallest number below $K_i$ such that the elementary relation $K_{i+1}=F_{\omega_{x+2}+\alpha_{i+1}}(k(\Gamma_{i+1}))$ holds). It remains to show that $\Gamma_{i+1}$ is false in $K_{i+1}$. For all formulas in $\Gamma_i\subseteq\Gamma_{i+1}$ this holds by the obvious monotonicity property. Aiming at a contradiction, assume that $m\in N$ is true in~$K_{i+1}$, i.e.~that we have $3m<K_{i+1}$. Then we get
\begin{equation*}
 3(m+1)<K_{i+1}^2<F^2_{\omega_{x+2}+\alpha_{i+1}}(k(\Gamma_{i+1}))<F_{\omega_{x+2}+\alpha_i}(k(\Gamma_i))=K_i.
\end{equation*}
This makes $Sm\in N$ true in $K_i$, contradicting the assumption that $\Gamma_i$ is false in~$K_i$.

\emph{Case.} Assume that $h_i$ ends with a rule ($\land$). This means that $\alpha_i$ is of the form~$\alpha_{i+1}+1$, that $\Gamma_i$ contains a formula $\varphi_0\land\varphi_1$, and that $h_i$ has immediate subproofs $*\,\vdash^{\alpha_{i+1}}_0\Gamma_i,\varphi_j$ for $j=0,1$. By assumption $\varphi_0\land\varphi_1$ is false in $K_i$. As ``being false in $K_i$'' is primitive recursively decidable we can pick $j\in\{0,1\}$ such that $\varphi_j$ is false in $K_i$. Let $h_{i+1}$ be the corresponding subproof $*\,\vdash^{\alpha_{i+1}}_0\Gamma_i,\varphi_j$. As in the previous case we compute $K_{i+1}<K_i$. Monotonicity implies that $\Gamma_{i+1}$ is false in $K_{i+1}$.

\emph{Case.} Assume that $h_i$ ends with a rule ($\lor$). Similar to the previous case, let $h_{i+1}$ be the (only) immediate subproof of $h_i$.

\emph{Case.} Assume that $h_i$ ends with a rule ($\exists$). This means that $\alpha_i$ is of the form~$\alpha_{i+1}+1$, that $\Gamma_i$ contains a formula $\exists_{y\in N}\,\varphi(y)$, and that $h_i$ has an immediate subproof $*\,\vdash^{\alpha_{i+1}}_0\Gamma_i,m\in N\land\varphi(m)$ for some numeral $m$. As in the previous cases, let $h_{i+1}$ be that subproof.

\emph{Case.} Assume that $h_i$ ends with an accumulation rule. Then we have an immediate subproof $*\,\vdash^{\alpha_{i+1}}_0\Gamma_i$ with $\alpha_{i+1}<_{k(\Gamma_i)}\alpha_i$. Take $h_{i+1}$ to be this subproof, and observe $\Gamma_{i+1}=\Gamma_i$. In view of $\alpha_i\leq\alpha_0=\omega_x^{\omega\cdot f(x)}$ the ordinals $\omega_{x+2}$ and $\alpha_i$ mesh (cf.~\cite[Section~2]{FRW-slow-consistency}). This yields $\omega_{x+2}+\alpha_{i+1}<_{k(\Gamma_i)}\omega_{x+2}+\alpha_i$ and then
\begin{equation*}
 K_{i+1}=F_{\omega_{x+2}+\alpha_{i+1}}(k(\Gamma_{i+1}))\leq F_{\omega_{x+2}+\alpha_i}(k(\Gamma_i))=K_i,
\end{equation*}
as required for condition~(iii).

\emph{Case.} Assume that $h_i$ ends with a cut over the formula $m\in N$. This means that we have immediate subproofs $*\,\vdash^{\alpha_{i+1}}_0\Gamma_i,m\in N$ and $*\,\vdash^{\alpha_{i+1}}_0\Gamma_i,m\notin N$ with~$\alpha_i=\alpha_{i+1}+1$. If $m\in N$ is false in $F_{\omega_{x+2}+\alpha_{i+1}}(k(\Gamma_i))$ then we can take $h_{i+1}$ to be the first of these subproofs. Otherwise, let $h_{i+1}$ be the second subproof. Since $m\in N$ is true in $F_{\omega_{x+2}+\alpha_{i+1}}(k(\Gamma_i))$ we see
\begin{equation*}
 k(\Gamma_{i+1})=\max\{k(\Gamma_i),3m\}<F_{\omega_{x+2}+\alpha_{i+1}}(k(\Gamma_i)).
\end{equation*}
This implies
\begin{equation*}
 K_{i+1}=F_{\omega_{x+2}+\alpha_{i+1}}(k(\Gamma_{i+1}))\leq F^2_{\omega_{x+2}+\alpha_{i+1}}(k(\Gamma_i))<F_{\omega_{x+2}+\alpha_i}(k(\Gamma_i))=K_{i+1}.
\end{equation*}
Note that any $\Sigma^N$-formula in $\Gamma_{i+1}=\Gamma_i,m\notin N$ lies in $\Gamma_i$. By monotonicity, these formulas are false in $K_{i+1}$.

\emph{Case.} Assume that $h_i$ ends with a cut over an arithmetical prime formula (not involving the relation symbol $\cdot\in N$). Similar to the previous case, let $h_{i+1}$ be the subproof of the false premise.

\emph{Case.} Assume that $h_i$ ends with a cut over a formula $\exists_{y\in N}\,\feps^*(m)=y$. In fact, $\feps^*(m)=y$ may be any elementary relation $R(m,y)$. Then we have immediate subproofs $*\,\vdash^{\alpha_{i+1}}_0\Gamma_i,\exists_{y\in N}\,R(m,y)$ and $*\,\vdash^{\alpha_{i+1}}_0\Gamma_i,\forall_{y\in N}\,\neg R(m,y)$ with \mbox{$\alpha_i=\alpha_{i+1}+1$}. If $\exists_{y\in N}\,R(m,y)$ is false in $F_{\omega_{x+2}+\alpha_{i+1}}(k(\Gamma_i))$ then we can take $h_{i+1}$ to be the first of these subproofs. Otherwise $\exists_{y\in N}\,R(m,y)$ is true in $F_{\omega_{x+2}+\alpha_{i+1}}(k(\Gamma_i))$, i.e.~the relation $R(m,l)$ holds for some number $l\in\mathbb N$ with $3l<F_{\omega_{x+2}+\alpha_{i+1}}(k(\Gamma_i))$. By inversion \cite[Lemma~3]{buchholz-wainer-87} we get a proof
\begin{equation*}
 *\,\vdash^{\alpha_{i+1}}_0\Gamma_i,l\notin N,\neg R(m,l).
\end{equation*}
Let $h_{i+1}$ be (a code for) this proof. In Buchholz' \cite{buchholz91} formalization of infinite proofs it is immediate that $h_{i+1}$ can be computed by a primitive recursive function (see also the corresponding clause in \cite[Definition~3.5]{freund-characterize-reflection}). As in the previous case we have
\begin{equation*}
 k(\Gamma_{i+1})=\max\{k(\Gamma_i),3l\}<F_{\omega_{x+2}+\alpha_{i+1}}(k(\Gamma_i))
\end{equation*}
and thus
\begin{equation*}
 K_{i+1}=F_{\omega_{x+2}+\alpha_{i+1}}(k(\Gamma_{i+1}))\leq F^2_{\omega_{x+2}+\alpha_{i+1}}(k(\Gamma_i))<F_{\omega_{x+2}+\alpha_i}(k(\Gamma_i))=K_i.
\end{equation*}
Monotonicity ensures that $\Gamma_i$ is false in $K_{i+1}$. By the choice of $l$ the formula $\neg R(m,l)$ is false as well (independently of $K_{i+1}$).
\end{proof}

Switching back to the realm of finite proofs, we can infer the following:

\begin{theorem}
For any primitive recursive function $g:\mathbb N\rightarrow\mathbb N$ we have
\begin{equation*}
\isigma_1\vdash\forall_x(\feps(2x+5)\!\downarrow\rightarrow\con(\isigma_x+\feps^*\!\downarrow)\!\restriction\!g(x)).
\end{equation*}
\end{theorem}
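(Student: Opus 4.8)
The plan is to deduce the theorem from Proposition~\ref{prop:feps-cons-infinite} by reversing the usual ordinal-theoretic embedding: a short $(\isigma_x+\feps^*\!\downarrow)$-proof of a contradiction is to be turned into an infinite $*$-proof of the empty sequent $\langle\rangle$ with small cut rank and ordinal height below $\omega^2$, which the proposition forbids. Fix a primitive recursive $g$ and reason inside $\isigma_1$. Towards the displayed finite consistency statement, suppose that for some $x$ there is a code $d$ of an $(\isigma_x+\feps^*\!\downarrow)$-proof of $0=1$ with at most $g(x)$ symbols.

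I would convert $d$ in two stages. First, working in the \emph{finite} sequent calculus for $\isigma_x+\feps^*\!\downarrow$ (with an induction rule for $\Sigma_x$-formulas, and with $\feps^*\!\downarrow$ as an additional axiom), I perform partial cut elimination down to the level of the $\Sigma_x$-induction formulas, so that only cuts over these $\Sigma_x$-formulas and over the $\Pi_2$-axiom $\feps^*\!\downarrow$ survive. Since $d$ has at most $g(x)$ symbols its cut rank is at most $g(x)+O(1)$, so this reduction takes at most $g(x)+O(1)$ steps, each costing only an elementary blow-up in size; the resulting proof therefore has size at most $q(g(x))$ for some fixed primitive recursive $q$. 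This is exactly where the fragment $\isigma_x$, rather than full $\pa$, is essential: eliminating all cuts would take $\varepsilon_0$-many steps, whereas reducing the cut rank only to the level of the induction formulas is a primitive-recursive amount of work. Second, I embed this low-cut-rank finite proof into the \emph{extended} infinitary system $*\!\vdash$ along the lines of \cite[Section~3]{buchholz-wainer-87}, formalised in $\isigma_1$ as in \cite{freund-characterize-reflection}: the $\Sigma_x$-induction rule is simulated by the $\omega$-rule together with cuts over the $\Sigma^N$-relativised induction formulas, so the cut rank stays linear in $x$; the axiom $\feps^*\!\downarrow$ is embedded precisely as in the paragraph before Proposition~\ref{prop:feps-cons-infinite}, via $*\!\vdash^\omega_0\forall_{x\in N}\exists_{y\in N}\feps^*(x)=y$, which contributes cut rank $0$; and the endsequent $0=1$ is turned into $\langle\rangle$ by one rank-$0$ cut against the axiom $0\neq 1$. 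The whole passage is a primitive recursive operation on $d$ whose correctness $\isigma_1$ proves, its cut rank can be bounded by $2x+3$ (raising the cut-rank bound to exactly this value is harmless), and --- since the finite proof has primitive-recursively bounded size --- its ordinal height has the form $\omega\cdot f_0(x)$ for a primitive recursive $f_0$ computable from $g$. Thus $\isigma_1$ proves: if $d$ exists, then $*\!\vdash^{\omega\cdot f_0(x)}_{2x+3}\langle\rangle$.

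Next I apply Proposition~\ref{prop:feps-cons-infinite} to the primitive recursive function $f$ defined by $f(y):=f_0(\max(0,\lfloor(y-3)/2\rfloor))$, so that $f(2x+3)=f_0(x)$; this provides an $n\in\mathbb N$ with $\isigma_1\vdash\forall_{y\ge n}(\feps(y+2)\!\downarrow\rightarrow *\!\nvdash^{\omega\cdot f(y)}_y\langle\rangle)$. Arguing in $\isigma_1$, let $x$ satisfy $2x+3\ge n$ and $\feps(2x+5)\!\downarrow$. The instance of the proposition at $y=2x+3$ has hypothesis $\feps((2x+3)+2)\!\downarrow$, which is precisely $\feps(2x+5)\!\downarrow$, and therefore gives $*\!\nvdash^{\omega\cdot f_0(x)}_{2x+3}\langle\rangle$, contradicting the conclusion of the previous paragraph. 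Hence no such $d$ exists, i.e.\ $\con(\isigma_x+\feps^*\!\downarrow)\!\restriction\!g(x)$ holds, for every $x$ with $2x+3\ge n$.

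For the finitely many remaining standard values of $x$ I would argue by soundness instead: $\feps^*$ is a genuine total function, so $\pa+\feps^*\!\downarrow$, and a fortiori its subtheory $\isigma_x+\feps^*\!\downarrow$, is consistent; hence $\con(\isigma_x+\feps^*\!\downarrow)\!\restriction\!g(x)$ is true, and for a fixed standard $x$ it is established in a weak theory by inspecting the finitely many proofs of length $\le g(x)$, so $\isigma_1$ proves it outright by $\Sigma_1$-completeness. Thus $\isigma_1\vdash\feps(2x+5)\!\downarrow\rightarrow\con(\isigma_x+\feps^*\!\downarrow)\!\restriction\!g(x)$ in this case too, and a single case distinction inside $\isigma_1$ establishes the theorem. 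The principal obstacle, I expect, is the two-stage conversion: one must check that partial cut elimination in the finite calculus followed by the Buchholz--Wainer embedding is genuinely primitive recursive and verifiable in the weak metatheory $\isigma_1$, that the size blow-up stays primitive recursive so that the resulting ordinal height really falls below $\omega^2$ in the shape $\omega\cdot f_0(x)$ required by Proposition~\ref{prop:feps-cons-infinite}, and --- the most delicate point of the bookkeeping --- that the cut rank of the embedded proof is bounded by $2x+3$, which is what makes $\feps(2x+5)\!\downarrow$ the precise hypothesis needed.
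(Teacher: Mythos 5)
Your proposal is correct and follows essentially the same route as the paper: anchored (free-)cut elimination in the finite Buchholz--Wainer sequent calculus with a primitive recursive size bound, embedding into the extended infinitary system with cut rank $2x+3$ and ordinal height $\omega\cdot f(2x+3)$, an appeal to Proposition~\ref{prop:feps-cons-infinite} at $y=2x+3$ so that the hypothesis becomes exactly $\feps(2x+5)\!\downarrow$, and $\Sigma_1$-completeness for the finitely many remaining values of $x$. The only cosmetic differences are that the paper translates the endsequent to $\langle\rangle$ already in the finite calculus rather than cutting against $0\neq 1$ after embedding, and that it makes explicit the accumulation step that uniformizes the ordinal height over all proofs of length $\leq g(x)$.
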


Before we come to the proof, let us remark that the premise $\feps(2x+5)\!\downarrow$ can probably be weakened to $\feps(x)\!\downarrow$. To do so one would have to make the proof system from \cite{buchholz-wainer-87} more efficient, as in~\cite{rathjen-carnielli-91,wainer-fairtlough-98,freund-characterize-reflection}. Describing the necessary changes would make the presentation more difficult, and the improvement makes no difference for the existence of polynomial bounds. Also note that we obtain a nice bound in the next result.

\begin{proof}
We will construct a primitive recursive function $f$ such that $\isigma_1$ proves the following: Whenever $p$ is a derivation $\isigma_x+\feps^*\!\downarrow\,\vdash\, 0=1$ of length at most~$g(x)$, we have (a code for) an infinite derivation~$*\!\vdash^{\omega\cdot f(2x+3)}_{2x+3}\langle\rangle$. The previous proposition yields a number $n$ with
\begin{equation*}
\isigma_1\vdash\forall_{x\geq n}(\feps(2x+5)\!\downarrow\rightarrow *\!\nvdash^{\omega\cdot f(2x+3)}_{2x+3}\langle\rangle).
\end{equation*}
By the definition of $f$ we obtain
\begin{equation*}
\isigma_1\vdash\forall_{x\geq n}(\feps(2x+5)\!\downarrow\rightarrow\con(\isigma_x+\feps^*\!\downarrow)\!\restriction\!g(x)).
\end{equation*}
This is sufficient, as the finitely many cases $x<n$ are covered by $\Sigma_1$-completeness. The construction of $f$ takes place in several stages: First, translate $p$ into a proof $f_0(p)$ of $\langle\rangle$ in the finite sequent calculus of \cite[Section~2]{buchholz-wainer-87}. In particular $f_0$ has to eliminate function symbols, since \cite{buchholz-wainer-87} implements addition and multiplication as relations (this simplifies case (v) in the proof of \cite[Lemma~2]{buchholz-wainer-87}; a similar proof system with function symbols can be found in~\cite{wainer-fairtlough-98}). For $x\geq 1$ it is indeed possible to eliminate the function symbols for addition and multiplication, without increasing the logical complexity of the induction formulas. By choosing $n\geq 1$ above we do not have to worry about the case $x=0$. Next, transform $f_0(p)$ into a proof $f_1(p)$ in which all free cuts have been eliminated (see e.g.~\cite[Section~2.4.6]{buss-introduction-98}). This means that the only remaining cut formulas appear in some axiom. Considering the implementation of induction in \cite[Section~2]{buchholz-wainer-87} the most complex cuts in $f_1(p)$ are of the form
\begin{equation*}
 \exists_y(\varphi(y)\land\neg\varphi(Sy)),
\end{equation*}
where $\varphi$ is an induction formula of complexity $\Sigma_x$. Since the object language used in \cite{buchholz-wainer-87} contains a relation symbol for each elementary relation we may assume that $\varphi$ is a ``strict $\Sigma_x$-formula'' of the form
\begin{equation*}
 \varphi\equiv\exists_{z_1}\forall_{z_2}\dots\exists/\forall_{z_x}\theta,
\end{equation*}
where $\theta$ is a prime formula (cf.~\cite[Section~1]{freund-characterize-reflection}). Now embed $f_1(p)$ into the infinitary calculus of \cite[Section~3]{buchholz-wainer-87}, extended by the axioms \mbox{$n\notin N,\exists_{y\in N}\feps^*(n)=y$} described above. We have already seen that these new axioms yield an infinite proof $*\!\vdash^\omega_0\forall_{x\in N}\exists_{y\in N}\,\feps^*(x)=y$, which embeds the axiom~$\feps^*\!\downarrow$ in the sense of~\cite[Lemma~2]{buchholz-wainer-87}. The cuts in the finite proof $f_1(p)$ carry over to the infinitary system. More precisely, the cut over $\exists_y(\varphi(y)\land\neg\varphi(Sy))$ mentioned above is transformed into a cut over the formula
\begin{equation*}
 \exists_y(y\in N\land(\varphi^N(y)\land\neg\varphi^N(Sy))),
\end{equation*}
where we have
\begin{equation*}
 \varphi^N\equiv\exists_{z_1\in N}\forall_{z_2\in N}\dots\exists/\forall_{z_x\in N}\theta\equiv\exists_{z_1}(z_1\in N\land\forall_{z_2}(z_2\notin N\lor(\cdots\theta\cdots))).
\end{equation*}
This means that the cut formulas in the infinite proof may have length $2x+3$. The height of the embedded proof is of the form $\omega\cdot l$, where $l$ depends on the height of~$f_1(p)$, and on the height of formulas and terms in $f_1(p)$. Thus we get an infinite proof
\begin{equation*}
 *\,\vdash^{\omega\cdot f_2(p)}_{2x+3}\langle\rangle,
\end{equation*}
 where $f_2$ is a primitive recursive function. In Buchholz'~\cite{buchholz91} formalization it is trivial to compute codes for these embedded proofs, because the finite proof itself represents its infinite embedding. Now set
\begin{equation*}
 f(2x+3):=\max\{f_2(p)+1\,|\,\text{$p$ is a proof $\isigma_x+\feps^*\!\downarrow\,\vdash\, 0=1$ of length $\leq g(x)$}\}.
\end{equation*}
Clearly $f$ is primitive recursive. Whenever $p$ is a derivation $\isigma_x+\feps^*\!\downarrow\,\vdash\, 0=1$ of length at most $g(x)$ we get
\begin{equation*}
 *\,\vdash^{\omega\cdot f(2x+3)}_{2x+3}\langle\rangle
\end{equation*}
by accumulation, since $f_2(p)<f(2x+3)$ implies $\omega\cdot f_2(p)<_k\omega\cdot f(2x+3)$ for any number $k$.
\end{proof}

Concerning the finite consistency of slow reflection and slow consistency over~$\pa$ we get the following result. In the next section we will construct short proofs of $\feps(n)\!\downarrow$ in Peano arithmetic, to get short (unconditional) proofs of the finite consistency statements.

\begin{theorem}\label{thm:finite-consistency-conditional}
We have
\begin{equation*}
\isigma_1\vdash\forall_x(\feps(x)\!\downarrow\rightarrow\con(\pa+\con^*(\pa))\!\restriction\!x),
\end{equation*}
and the same with $\rfn^*_{\Pi_2}(\pa)$ at the place of $\con^*(\pa)$.
\end{theorem}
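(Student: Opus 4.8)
The plan is to deduce this from the previous theorem by three elementary moves: bounding the induction complexity of a hypothetical short refutation, replacing the extra axiom by a fixed derivation from $\feps^*\!\downarrow$, and choosing the fragment index in the previous theorem generously enough to turn its premise $\feps(2x+5)\!\downarrow$ into the premise $\feps(x)\!\downarrow$ required here. Write $\Phi$ for either $\con^*(\pa)$ or $\rfn^*_{\Pi_2}(\pa)$. By the equivalences recalled in Section~\ref{sect:preliminaries} we have $\isigma_1\vdash\feps^*\!\downarrow\rightarrow\Phi$, so fix a proof $D_\Phi$ of $\Phi$ in $\isigma_1+\feps^*\!\downarrow$; its only induction axioms are $\Sigma_1$-instances. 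Let $c_\Phi$ be a constant bounding the length of $D_\Phi$ together with the little bookkeeping introduced below.

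First I would apply the previous theorem with the linear, hence primitive recursive, function $g(y):=2y+7+c_\Phi$, obtaining
\begin{equation*}
 \isigma_1\vdash\forall_y\bigl(\feps(2y+5)\!\downarrow\rightarrow\con(\isigma_y+\feps^*\!\downarrow)\!\restriction\!(2y+7+c_\Phi)\bigr).
\end{equation*}
Next I would argue inside $\isigma_1$. Fix $x$ with $\feps(x)\!\downarrow$ and put $k:=\lfloor(x-5)/2\rfloor$, so $2k+5\leq x$; since $\isigma_1$ proves the basic monotonicity properties of the fast-growing hierarchy, $\feps(x)\!\downarrow$ gives $\feps(2k+5)\!\downarrow$, and hence $\con(\isigma_k+\feps^*\!\downarrow)\!\restriction\!(2k+7+c_\Phi)$. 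Suppose, towards a contradiction, that $p$ is a proof of $0=1$ in $\pa+\Phi$ with at most $x$ symbols. Every formula occurring in $p$ has at most $x$ symbols, and for the usual G\"odel coding a $\Sigma_m$-induction axiom has more than $2m+5$ symbols (its induction formula already contains $m$ quantifiers and occurs several times in the axiom); so $p$ invokes $\Sigma_m$-induction only for $m\leq k$, i.e.\ $p$ is a proof of $0=1$ in $\isigma_k+\Phi$. Prepending $D_\Phi$ and, at each line where $p$ appealed to $\Phi$ as an axiom, appealing instead to the conclusion of $D_\Phi$, turns $p$ into a proof $p'$ of $0=1$ in $\isigma_k+\feps^*\!\downarrow$ (using $k\geq 1$) with $|p'|\leq x+c_\Phi<2k+7+c_\Phi$. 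This contradicts $\con(\isigma_k+\feps^*\!\downarrow)\!\restriction\!(2k+7+c_\Phi)$; hence no such $p$ exists, i.e.\ $\con(\pa+\Phi)\!\restriction\!x$.

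The previous paragraph needs $x$ above a fixed threshold $n_0$, so that $k\geq 1$ and the complexity estimate is available. The finitely many cases $x<n_0$ I would treat as at the end of the previous proof: for each fixed numeral $x_0<n_0$ the sentence $\con(\pa+\Phi)\!\restriction\!x_0$ is true — because $\Phi$ is true and so $\pa+\Phi$ is consistent — and, concerning only the finitely many candidate proofs of length $\leq x_0$, it is provable in $\isigma_1$. A bounded case distinction on whether $x<n_0$ then yields $\isigma_1\vdash\forall_x(\feps(x)\!\downarrow\rightarrow\con(\pa+\Phi)\!\restriction\!x)$, and instantiating $\Phi$ as $\con^*(\pa)$ and as $\rfn^*_{\Pi_2}(\pa)$ proves both assertions of the theorem.

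The one point I expect to require care is the bookkeeping that absorbs the factor $2$ in $\feps(2x+5)\!\downarrow$: it works precisely because a proof of length $\leq x$ can only use induction of complexity well below $x/2$ — the exact fraction, and thus $n_0$, depending on the coding conventions but being immaterial for the polynomial bounds obtained afterwards — so the previous theorem may be used at fragment index $k\approx x/2$ instead of $k\approx x$. The additive length increase $c_\Phi$ caused by inlining $D_\Phi$ does no harm, since it is a constant and is absorbed into the primitive recursive function $g$ rather than into the restriction parameter.
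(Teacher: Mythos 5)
Your proposal follows essentially the same route as the paper: bound the quantifier complexity of any refutation of length $\leq x$ by roughly $(x-5)/2$ so that the factor $2$ in the premise $\feps(2x+5)\!\downarrow$ of the preceding theorem is absorbed, replace the axiom $\con^*(\pa)$ resp.\ $\rfn^*_{\Pi_2}(\pa)$ by a fixed derivation from $\feps^*\!\downarrow$ via the equivalences of Section~\ref{sect:preliminaries}, and dispose of the finitely many small values of $x$ by $\Sigma_1$-completeness. The one place where your bookkeeping is too optimistic is the claim that a $\pa+\Phi$-refutation of length $\leq x$ \emph{is} an $\isigma_k+\Phi$-refutation of essentially the same length: its induction formulas merely have $\leq k$ quantifiers and must still be prenexed and contracted (and $\Pi_k$-induction reduced to $\Sigma_k$-induction) to land in $\isigma_k$, which can inflate the length beyond your linear bound $x+c_\Phi$; the paper sidesteps this by defining $g$ as a primitive recursive maximum over the lengths of all transformed refutations, which is available since the preceding theorem holds for an arbitrary primitive recursive $g$, and the same repair applies verbatim to your argument.
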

\begin{proof}
By \cite[Proposition~3.4]{freund-proof-length} we have
\begin{equation*}
 \isigma_1\vdash\rfn^*_{\Pi_2}(\pa)\leftrightarrow\feps^*\!\downarrow.
\end{equation*}
Just as for usual provability, slow reflection implies slow consistency, i.e.~we have
\begin{equation*}
  \isigma_1\vdash\rfn^*_{\Pi_2}(\pa)\rightarrow\con^*(\pa).
\end{equation*}
Now the claim follows from the previous theorem, observing that the length of a proof bounds the complexity of the induction formulas that it contains. More precisely, if a proof of length $x$ contains an induction axiom for an induction formula with~$y$ quantifiers then we have $2y+5<x$. In other words, we have $y\leq i(x)$ for
\begin{equation*}
i(x)=\min\{y\,|\,2(y+1)+5\geq x\}.
\end{equation*}
Let $g_0:\mathbb N^2\rightarrow\mathbb N$ be a primitive recursive function with the following property: If $p$ codes a proof $\pa+\con^*(\pa)\vdash 0=1$ of length at most $x$ then $g_0(p,x)$ codes a proof of $0=1$ in the theory $\isigma_{i(x)}+\con^*(\pa)$. As we have just seen, this can be achieved by prenexing and contracting quantifiers in the induction formulas. As the finitely many cases $x\leq 7$ are covered by $\Sigma_1$-completeness we may assume $x\geq 8$. Then we have $i(x)>0$, and the above implications allow us to transform $g_0(p,x)$ into a code $g_1(p,x)$ of a proof~$\isigma_{i(x)}+\feps^*\!\downarrow\,\vdash 0=1$. Writing $g_2(p,x)$ for the length of the proof $g_1(p,x)$, consider the primitive recursive function
\begin{multline*}
 g(y):=\max\{g_2(p,2(y+1)+5)\,|\,\text{$p$ is a proof $\pa+\con^*(\pa)\vdash 0=1$}\\
 \text{of length $\leq 2(y+1)+5$}\}.
\end{multline*}
Using the previous theorem for this function $g$ we can deduce the claim: Consider a number $x$ with $\feps(x)\!\downarrow$. As before we may assume $x\geq 5$, so that we have $2\cdot i(x)+5\leq x$ and thus $\feps(2\cdot i(x)+5)\!\downarrow$. Aiming at a contradiction, assume that $p$ is a proof $\pa+\con^*(\pa)\vdash 0=1$ of length at most $x\leq 2(i(x)+1)+5$. By construction $g_1(p,2(i(x)+1)+5)$ is a proof $\isigma_{i(x)}+\feps^*\!\downarrow\,\vdash 0=1$ of length at most~$g(i(x))$. This contradicts $\con(\isigma_{i(x)}+\feps^*\!\downarrow)\!\restriction\!g(i(x))$, as provided by the previous theorem.
\end{proof}

In particular, we can establish the following known result by purely proof-theoretic methods. All previous proofs used non-standard models of arithmetic.

\begin{corollary}[{\cite{FRW-slow-consistency}}]\label{cor:slow-doesnt-imply-usual}
 We have
\begin{equation*}
 \pa+\con^*(\pa)\nvdash\con(\pa).
\end{equation*}
\end{corollary}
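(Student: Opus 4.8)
The plan is to derive the corollary from Theorem~\ref{thm:finite-consistency-conditional} by a standard reflection argument, exploiting the fact that $\pa+\con(\pa)$ proves $\feps(x)\!\downarrow$ for every numeral $x$. Suppose toward a contradiction that $\pa+\con^*(\pa)\vdash\con(\pa)$. First I would fix, via Theorem~\ref{thm:finite-consistency-conditional}, the $\isigma_1$-provable implication $\feps(x)\!\downarrow\rightarrow\con(\pa+\con^*(\pa))\!\restriction\!x$, and note that by $\Sigma_1$-completeness (formalised in $\isigma_1$) the theory $\pa$ proves $\con(\pa+\con^*(\pa))\!\restriction\!n\rightarrow\con(\pa+\con(\pa))\!\restriction\!n$ whenever the assumed finite proof of $\con(\pa)$ from $\con^*(\pa)$ has been fixed as a numeral --- more precisely, from a single $\pa$-proof $\pi$ of $\con^*(\pa)\rightarrow\con(\pa)$ one obtains, for each $n$, that $\con(\pa+\con^*(\pa))\!\restriction\!n$ implies $\con(\pa+\con(\pa))\!\restriction\!(n-c)$ for a constant $c$ depending only on $\pi$ (cut against $\pi$ at the top of any hypothetical refutation).

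The second step is to invoke the well-known fact, provable in $\isigma_1$, that $\rfn_{\Pi_2}(\pa)$ --- equivalently $\forall x\,\feps(x)\!\downarrow$ --- follows from $\con(\pa)$ together with $\Sigma_1$-induction along $\varepsilon_0$, or more directly that $\pa+\con(\pa)\vdash\feps(n)\!\downarrow$ with proofs whose length we do not even need to control here; all we need is that $\pa+\con(\pa)$ proves $\feps(n)\!\downarrow$ for each standard $n$. Combining this with the first step: inside $\pa+\con(\pa)$, for any standard $n$ we have $\feps(n)\!\downarrow$, hence $\con(\pa+\con^*(\pa))\!\restriction\!n$ by Theorem~\ref{thm:finite-consistency-conditional}, hence $\con(\pa+\con(\pa))\!\restriction\!(n-c)$ by the cut against $\pi$. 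Letting $n$ range over all numerals and appealing to the fact that $\{\,\con(\mathbf S)\!\restriction\!n : n\in\mathbb N\,\}$ axiomatises (over a weak base) the full statement $\con(\mathbf S)$ in the limit --- or rather, noting that $\pa+\con(\pa)$ proves each finite instance $\con(\pa+\con(\pa))\!\restriction\!n$ uniformly --- one shows that $\pa+\con(\pa)$ proves $\con(\pa+\con(\pa))$, contradicting G\"odel's second incompleteness theorem applied to the consistent theory $\pa+\con(\pa)$.

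The cleaner route, which I would actually write up, avoids passing through the limit statement: argue entirely inside $\pa+\con(\pa)$. Assuming $\pa+\con^*(\pa)\vdash\con(\pa)$ via a fixed proof $\pi$, work in $\pa+\con(\pa)$ and reason as follows. Since $\con(\pa)$ holds and $\isigma_1$-provably implies $\con^*(\pa)$ is consistent with $\pa$ --- indeed $\pa+\con^*(\pa)$ is a subtheory of $\pa+\con(\pa)$ given $\pi$ --- we get $\con(\pa+\con^*(\pa))$; but then $\pi$ witnesses $\pa+\con^*(\pa)\vdash\con(\pa)$, so $\con(\pa+\con^*(\pa))$ together with provable $\Sigma_1$-reasoning yields $\con(\pa+\con(\pa))$, again contradicting the second incompleteness theorem inside $\pa+\con(\pa)$. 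To make this last inference legitimate one uses that $\con^*(\pa)$ is, provably in $\isigma_1$, the consistency statement for a slowed-down axiomatisation of $\pa$, so $\pa+\con^*(\pa)$ and $\pa$ prove the same $\Pi_1$-sentences relevant here, and a $\pa+\con^*(\pa)$-proof of $\con(\pa)$ transforms uniformly into a $\pa+\con(\pa)$-internal proof that $\pa+\con(\pa)$ is consistent.

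I expect the main obstacle to be bookkeeping the finitary reductions carefully enough that the argument goes through in the intended base theory: in particular, making precise how a single fixed proof $\pi$ of $\con^*(\pa)\rightarrow\con(\pa)$ lets one pass, uniformly and with only a constant additive loss in the bound, from $\con(\pa+\con^*(\pa))\!\restriction\!x$ to a contradiction with G\"odel's theorem, and ensuring that the implication ``$\feps(x)\!\downarrow$ for all $x$'' is deployed only in the form ``$\feps(n)\!\downarrow$ for each numeral $n$'', which is what $\pa+\con(\pa)$ actually delivers. None of these steps is deep, but the indices and the precise choice of intermediate theory need care; the underlying idea is simply that Theorem~\ref{thm:finite-consistency-conditional} turns a refutation of slow consistency into a proof that $\pa$ proves its own consistency in the relevant fragmentary sense.
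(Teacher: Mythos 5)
Your overall strategy --- reducing to G\"odel's second incompleteness theorem by showing that $\pa+\con(\pa)$ proves $\con(\pa+\con^*(\pa))$ --- is exactly the paper's, but both of your attempts to carry out the key step contain a genuine gap. In your first route you establish that $\pa+\con(\pa)$ proves $\con(\pa+\con(\pa))\!\restriction\!(n-c)$ \emph{for each standard numeral} $n$, and then pass ``in the limit'' to the full statement $\con(\pa+\con(\pa))$. This passage is invalid: the collection of all true finite instances $\con(\mathbf S)\!\restriction\!n$ consists of $\Delta_0$-sentences, each provable already in $\isigma_1$ by $\Sigma_1$-completeness, and it does not axiomatise the $\Pi_1$-sentence $\con(\mathbf S)$. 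Indeed $\pa+\con(\pa)$ proves every instance $\con(\pa+\con(\pa))\!\restriction\!n$ trivially, yet by G\"odel it cannot prove the universal statement. What is needed is a \emph{uniform, internalized} version: one must show, provably in $\isigma_1$, that $\forall_x\operatorname{Pr}_\pa(\con(\pa+\con^*(\pa))\!\restriction\!\dot x)$ holds with the quantifier $\forall_x$ standing \emph{outside} the provability predicate (so that it also covers nonstandard values), and then apply formalized $\Pi_1$-reflection from $\con(\pa)$. This is how the paper proceeds: it internalizes Theorem~\ref{thm:finite-consistency-conditional} as $\operatorname{Pr}_\pa(\forall_x(\feps(x)\!\downarrow\rightarrow\con(\pa+\con^*(\pa))\!\restriction\!x))$ and combines it with the $\isigma_1$-provable fact $\forall_x\operatorname{Pr}_\pa(\feps(\dot x)\!\downarrow)$, not merely with the external observation that each $\feps(n)\!\downarrow$ is provable.

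Your ``cleaner route'' is worse off: the step ``since $\con(\pa)$ holds \dots\ we get $\con(\pa+\con^*(\pa))$'' is precisely the non-trivial content of the corollary, and your justification for it is circular or false. The claim that $\pa+\con^*(\pa)$ is a subtheory of $\pa+\con(\pa)$ given $\pi$ is backwards --- the hypothesis $\pi$ makes $\pa+\con(\pa)$ a subtheory of $\pa+\con^*(\pa)$ --- and the claim that $\pa+\con^*(\pa)$ and $\pa$ prove the same $\Pi_1$-sentences is false, since $\con^*(\pa)$ is itself a $\Pi_1$-sentence unprovable in $\pa$. The fact that $\con^*(\pa)$ is the consistency statement of a slowed-down axiomatisation yields at most $\con(\pa)\rightarrow\con^*(\pa)$, not $\con(\pa)\rightarrow\con(\pa+\con^*(\pa))$; the latter is exactly what Theorem~\ref{thm:finite-consistency-conditional}, internalized and combined with provable reflection, is designed to deliver.
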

\begin{proof}
 In view of G\"odel's second incompleteness theorem it suffices to show
\begin{equation*}
 \pa+\con(\pa)\vdash\con(\pa+\con^*(\pa)).
\end{equation*}
Internalizing the previous theorem we have
\begin{equation*}
 \isigma_1\vdash\operatorname{Pr}_\pa(\forall_x(\feps(x)\!\downarrow\rightarrow\con(\pa+\con^*(\pa))\!\restriction\!x)),
\end{equation*}
where $\operatorname{Pr}_\pa(\cdot)$ formalizes provability in Peano arithmetic. It is easy to see that we have $\isigma_1\vdash\forall_x\operatorname{Pr}_\pa(\feps(\dot x)\!\downarrow)$, as in \cite[Lemma~1.5]{freund-slow-reflection}. Together we get
\begin{equation*}
 \isigma_1\vdash\forall_x\operatorname{Pr}_\pa(\con(\pa+\con^*(\pa))\!\restriction\!\dot x).
\end{equation*}
Since consistency implies $\Pi_1$-reflection this yields
\begin{equation*}
 \isigma_1+\con(\pa)\vdash\forall_x\con(\pa+\con^*(\pa))\!\restriction\!x,
\end{equation*}
as desired.
\end{proof}

Beyond the previous corollary, we can bound the consistency strength of slow reflection. Note that the following is weaker than the known result, which accomodates slow reflection for arbitrary logical complexity, rather than just $\Pi_2$-reflection.

\begin{corollary}[{\cite{freund-slow-reflection}}]
We have
\begin{equation*}
\isigma_1\vdash\con(\pa)\rightarrow\con(\pa+\rfn^*_{\Pi_2}(\pa)).
\end{equation*}
\end{corollary}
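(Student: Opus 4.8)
The argument will be a close variant of the proof of Corollary~\ref{cor:slow-doesnt-imply-usual}, now invoking the second half of Theorem~\ref{thm:finite-consistency-conditional}, which concerns $\rfn^*_{\Pi_2}(\pa)$ rather than $\con^*(\pa)$. The plan is to internalize the conditional finite consistency statement, discharge the premise $\feps(x)\!\downarrow$ instance by instance inside the provability predicate, and then pass from all finite instances to the full consistency statement by means of $\Pi_1$-reflection. In contrast to Corollary~\ref{cor:slow-doesnt-imply-usual}, we do not need G\"odel's second incompleteness theorem: the desired implication drops out of the internalization argument directly.

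Concretely, I would start from Theorem~\ref{thm:finite-consistency-conditional}, which gives $\isigma_1\vdash\forall_x(\feps(x)\!\downarrow\rightarrow\con(\pa+\rfn^*_{\Pi_2}(\pa))\!\restriction\!x)$. Since $\isigma_1$ is a subtheory of $\pa$, internalizing yields $\isigma_1\vdash\operatorname{Pr}_\pa(\forall_x(\feps(x)\!\downarrow\rightarrow\con(\pa+\rfn^*_{\Pi_2}(\pa))\!\restriction\!x))$. As in \cite[Lemma~1.5]{freund-slow-reflection} one has $\isigma_1\vdash\forall_x\operatorname{Pr}_\pa(\feps(\dot x)\!\downarrow)$, because for each numeral $n$ the theory $\pa$ explicitly proves $\feps(n)\!\downarrow$ and a code for such a proof is an $\isigma_1$-provably total function of $n$. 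Combining the two facts by provable modus ponens under $\operatorname{Pr}_\pa(\cdot)$ (available from the usual derivability conditions) gives $\isigma_1\vdash\forall_x\operatorname{Pr}_\pa(\con(\pa+\rfn^*_{\Pi_2}(\pa))\!\restriction\!\dot x)$.

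For the last step, note that $\forall_x\con(\pa+\rfn^*_{\Pi_2}(\pa))\!\restriction\!x$ is a $\Pi_1$-sentence whose numerical instances are $\Delta_0$ (each merely bounds potential proofs of $0=1$ by a number elementary in $x$). Over $\isigma_1$ the hypothesis $\con(\pa)$ implies uniform $\Pi_1$-reflection for $\pa$: if $\pa$ proved a false $\Pi_1$-sentence $\forall x\,\theta(x)$, picking a witness $m$ with $\neg\theta(m)$ we would get $\pa\vdash\theta(m)$ from the universal statement and $\pa\vdash\neg\theta(m)$ by $\Sigma_1$-completeness, contradicting $\con(\pa)$. Applying this to the instances above gives $\isigma_1+\con(\pa)\vdash\forall_x\con(\pa+\rfn^*_{\Pi_2}(\pa))\!\restriction\!x$. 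Finally, $\isigma_1$ proves $\forall_x\con(\mathbf T)\!\restriction\!x\leftrightarrow\con(\mathbf T)$ for any r.e.\ theory $\mathbf T$ with an elementary axiom set, since any purported proof of $0=1$ has some number of symbols and is therefore excluded by the corresponding finite instance. Hence $\isigma_1+\con(\pa)\vdash\con(\pa+\rfn^*_{\Pi_2}(\pa))$, which by the deduction theorem is the claim.

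I expect no serious obstacle here beyond bookkeeping. The three points that need a little care are: the formalized modus ponens inside $\operatorname{Pr}_\pa(\cdot)$; the availability of $\isigma_1$-provable codes for the proofs of $\feps(n)\!\downarrow$, which is exactly \cite[Lemma~1.5]{freund-slow-reflection}; and the fact that $\con(\pa)$ carries enough $\Pi_1$-reflection to collapse the infinitely many finite consistency instances into the single statement $\con(\pa+\rfn^*_{\Pi_2}(\pa))$. All three are routine, so the proof is essentially a one-paragraph adaptation of the argument already given for Corollary~\ref{cor:slow-doesnt-imply-usual}.
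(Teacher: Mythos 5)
Your proposal is correct and is exactly the paper's argument: the paper proves this corollary by repeating the internalization argument of Corollary~\ref{cor:slow-doesnt-imply-usual} with the $\rfn^*_{\Pi_2}(\pa)$ version of Theorem~\ref{thm:finite-consistency-conditional}, which is precisely what you do (and your observation that G\"odel's second incompleteness theorem is not needed here, since the conclusion is already the relative consistency statement, is accurate).
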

\begin{proof}
 As in the previous proof, using Theorem~\ref{thm:finite-consistency-conditional} for slow reflection rather than slow consistency.
\end{proof}

As a consequence, we obtain a new proof of the following result. Transfinite iterations of slow consistency are defined in \cite[Section~3]{freund-slow-reflection} and \cite[Section~4]{henk-pakhomov}.

\begin{corollary}[{\cite{freund-slow-reflection,henk-pakhomov}}]\label{cor:eps-iterations-slow-consistency}
The usual consistency statement for Peano arithmetic is equivalent to $\varepsilon_0$ iterations of slow consistency, over the base theory $\isigma_1$.
\end{corollary}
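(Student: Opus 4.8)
The plan is to deduce Corollary~\ref{cor:eps-iterations-slow-consistency} from the results already established in this section, by combining the \emph{lower bound} (previous corollary) with a matching \emph{upper bound} on the consistency strength of $\varepsilon_0$ iterations of slow consistency. Write $\con^{*\alpha}(\pa)$ for the $\alpha$-th iterate of slow consistency in the sense of \cite[Section~3]{freund-slow-reflection} and \cite[Section~4]{henk-pakhomov}. The goal is to show, over $\isigma_1$, that $\con(\pa)\leftrightarrow\con^{*\varepsilon_0}(\pa)$.

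For the direction $\con^{*\varepsilon_0}(\pa)\rightarrow\con(\pa)$ (the "easy" one, giving the upper bound on strength): the iteration is designed so that at limit stages one takes a conjunction (or the obvious $\forall$-combination) of earlier stages, and each successor step adds one more layer of slow consistency. The key point is that transfinite induction along $\varepsilon_0$ is \emph{not} available in $\isigma_1$ directly, so one argues externally: it is provable (in $\isigma_1$, or even finitistically) that the formal theory $\pa+\con^{*\varepsilon_0}(\pa)$ proves $\feps\!\downarrow$, because slow $\Pi_2$-reflection provides exactly the progressivity needed to push the iteration of slow consistency up to $\varepsilon_0$, and $\varepsilon_0$-many iterations of "$\feps(x)\!\downarrow\rightarrow\con(\isigma_x)$" unwind to full $\con(\pa)$ — this is essentially the content of the remark in Section~\ref{sect:preliminaries} that slow reflection makes the iterations progressive. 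Concretely, one shows $\isigma_1\vdash\con^{*(\alpha+1)}(\pa)\leftrightarrow(\feps\!\downarrow\rightarrow\con(\pa+\con^{*\alpha}(\pa)))$ uniformly in a notation for $\alpha$, and then checks that at stage $\varepsilon_0$, using $\{\varepsilon_0\}(n)=\omega_{n+1}$ together with the characterization $\rfn_{\Pi_2}(\isigma_x)\leftrightarrow F_{\omega_x}\!\downarrow$, the iterated statement collapses to $\con(\pa)$ — this is exactly the argument already carried out model-theoretically in \cite{freund-slow-reflection,henk-pakhomov}, and here we may simply cite it or recast its finitary core.

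For the direction $\con(\pa)\rightarrow\con^{*\varepsilon_0}(\pa)$ (the lower bound, which is where our new method pays off): here I would use the previous corollary, $\isigma_1\vdash\con(\pa)\rightarrow\con(\pa+\rfn^*_{\Pi_2}(\pa))$, together with the observation that slow $\Pi_2$-reflection internally proves the induction step for iterating slow consistency. More precisely, arguing in $\isigma_1+\con(\pa)$: we have $\con(\pa+\rfn^*_{\Pi_2}(\pa))$, hence $\pa+\rfn^*_{\Pi_2}(\pa)$ is a consistent r.e.\ extension of $\pa$; inside this theory one proves by (formalized) transfinite induction up to $\varepsilon_0$ — legitimate since $\pa$, let alone $\pa+\rfn^*_{\Pi_2}(\pa)$, proves each instance $\mathrm{TI}(\alpha)$ for $\alpha<\varepsilon_0$, and $\rfn^*_{\Pi_2}(\pa)$ supplies uniformity — that $\con^{*\alpha}(\pa)$ holds for all $\alpha<\varepsilon_0$, whence $\con^{*\varepsilon_0}(\pa)$. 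The non-trivial ingredient is that $\rfn^*_{\Pi_2}(\pa)$ is precisely strong enough to verify the successor step $\con^{*\alpha}(\pa)\rightarrow\con^{*(\alpha+1)}(\pa)$ while staying $\Pi_2$ (so that reflection applies), which is the "progressivity" remark in Section~\ref{sect:preliminaries}; this has been isolated in \cite{freund-slow-reflection} and can be quoted.

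The main obstacle I anticipate is bookkeeping the transfinite iteration at the level of \emph{formalized} provability: one must fix a reasonable $\Delta_0$ (or at worst $\Sigma_1$) definition of $\alpha\mapsto\con^{*\alpha}(\pa)$ along Sommer's notation system for $\varepsilon_0$, check that the successor and limit clauses behave as expected provably in $\isigma_1$, and verify that the limit-stage $\forall$-combination does not raise logical complexity past $\Pi_2$ in a way that would break the application of slow reflection. Since all of this is already done carefully in \cite{freund-slow-reflection,henk-pakhomov}, the cleanest route is to cite those developments for the definition and its basic properties, and present here only the short deduction that bolts together the previous corollary (for the lower bound) with the standard collapse of $\varepsilon_0$ iterations of $\feps(x)\!\downarrow\rightarrow\con(\isigma_x)$ to $\con(\pa)$ (for the upper bound). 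I would keep the proof to a few lines, emphasising that the genuinely new content — a proof-theoretic rather than model-theoretic route — is entirely contained in Theorem~\ref{thm:finite-consistency-conditional} and its corollaries above.
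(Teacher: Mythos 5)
Your architecture coincides with the paper's: the direction $\con(\pa)\rightarrow\forall_{\alpha\prec\varepsilon_0}\con^*_\alpha(\pa)$ is obtained from the previous corollary together with the progressivity of iterated slow consistency under $\rfn^*_{\Pi_2}(\pa)$ and Gentzen's transfinite induction, and the converse direction is delegated to the literature (the paper cites the Schmerl--Beklemishev-style result). One step of your lower-bound argument is misdescribed, however, and would fail as literally written: you have $\pa+\rfn^*_{\Pi_2}(\pa)$ prove, ``by (formalized) transfinite induction up to $\varepsilon_0$'', that $\con^*_\alpha(\pa)$ holds \emph{for all} $\alpha\prec\varepsilon_0$. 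That theory does not prove the single sentence $\forall_{\alpha\prec\varepsilon_0}\con^*_\alpha(\pa)$: by the corollary itself this sentence is equivalent to $\con(\pa)$, and if $\pa+\rfn^*_{\Pi_2}(\pa)$ proved $\con(\pa)$ it would, by the previous corollary, prove its own consistency. Gentzen supplies transfinite induction only up to each \emph{fixed} ordinal below $\varepsilon_0$, so one gets $\pa+\rfn^*_{\Pi_2}(\pa)\vdash\con^*_\alpha(\pa)$ instance by instance; the uniformity in $\alpha$ does not come from $\rfn^*_{\Pi_2}(\pa)$ ``supplying uniformity'', but from formalizing this instance-wise provability as a single $\isigma_1$-theorem and then converting provability into truth via $\Pi_1$-reflection, which $\con(\pa+\rfn^*_{\Pi_2}(\pa))$ makes available. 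The paper makes this detour through formalized provability and reflection explicit; your ``whence $\con^{*\varepsilon_0}(\pa)$'' elides exactly this step. (A smaller point: the equivalence $\con^{*(\alpha+1)}(\pa)\leftrightarrow(\feps\!\downarrow\rightarrow\con(\pa+\con^{*\alpha}(\pa)))$ you propose for the converse direction is not how the iteration is defined and is not needed, since that direction is in any case quoted from the references.)
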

\begin{proof}
As for the usual notion of consistency, slow $\Pi_2$-reflection implies that iterations of slow consistency are progressive. In other words, we have
\begin{equation*}
 \pa+\rfn^*_{\Pi_2}(\pa)\vdash\forall_\gamma(\forall_{\beta\prec\gamma}\con^*_\beta(\pa)\rightarrow\con^*_\gamma(\pa)),
\end{equation*}
where $\con^*_\gamma(\pa)$ denotes the $\gamma$-th iterate of slow consistency. Also recall Gentzen's result that Peano arithmetic proves transfinite induction up to any fixed ordinal below $\varepsilon_0$. Putting these observations together we get
\begin{equation*}
 \pa+\rfn^*_{\Pi_2}(\pa)\vdash\con^*_\alpha(\pa)
\end{equation*}
for each $\alpha\prec\varepsilon_0$. This fact itself can be proved in the theory $\isigma_1$. Now assume $\con(\pa)$ and invoke the previous corollary to get $\con(\pa+\rfn^*_{\Pi_2}(\pa))$. As consistency implies $\Pi_1$-reflection we learn that $\con^*_\alpha(\pa)$ does indeed hold for all ordinals~$\alpha\prec\varepsilon_0$. Thus we have proved the direction
\begin{equation*}
 \isigma_1\vdash\con(\pa)\rightarrow\forall_{\alpha\prec\varepsilon_0}\con^*_{\alpha}(\pa).
\end{equation*}
The converse direction follows from a result of Schmerl and Beklemishev, as in \cite[Corollary~3.6]{freund-slow-reflection} or \cite[Lemma~18]{henk-pakhomov}.
\end{proof}

\section{Proofs of Finite Consistency}\label{sect:short-proofs}

In this section we construct proofs $\pa\vdash\feps(n)\!\downarrow$ which are polynomial in~$n$. Together with Theorem~\ref{thm:finite-consistency-conditional} this will yield polynomial proofs of the finite consistency statements $\con(\pa+\con^*(\pa))\!\restriction\! n$. Note that there are ``naive" proofs of $\feps(n)\!\downarrow$ via $\Sigma_1$-completeness. However, as the value $\feps(n)$ is extremely large, these proofs are very long. Indeed, we know from \cite{freund-proof-length} that proofs of $\feps(n)\!\downarrow$ in the fragment $\isigma_n$ cannot be constructed by primitive recursion, let alone in a polynomial way. This means that we have to exhaust the full power of Peano arithmetic to get the desired polynomial proofs.

First we reduce $\feps(n)\!\downarrow$ to an appropriate instance of transfinite induction: Consider a formula $\psi\equiv\psi(\gamma)$, possibly with further variables. As in~\cite{feferman-reflections} we~write
\newcommand{\prog}{\operatorname{Prog}}
\newcommand{\ti}{\operatorname{TI}}
\begin{align*}
\prog(\widehat\gamma\,\psi)&\equiv\forall_\alpha(\forall_{\beta\prec\alpha}\psi(\beta)\rightarrow\psi(\alpha)),\\
\ti(\alpha,\widehat\gamma\,\psi)&\equiv\prog(\widehat\gamma\,\psi)\rightarrow\forall_{\beta\prec\alpha}\psi(\beta).
\end{align*}
It is standard to observe the following:

\begin{lemma}\label{lem:ti-to-feps}
We have
\begin{equation*}
\isigma_1\vdash\forall_x(\ti(\omega_{x+1},\widehat\gamma\,F_\gamma\!\downarrow)\rightarrow\feps(x)\!\downarrow).
\end{equation*}
\end{lemma}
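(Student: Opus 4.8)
The plan is to recall the defining clauses of the fast-growing hierarchy and the fundamental sequence $\{\varepsilon_0\}(x)=\omega_{x+1}$, and then to unwind them by instantiating the progressiveness hypothesis at the appropriate points. First I would argue in $\isigma_1$: assume $\ti(\omega_{x+1},\widehat\gamma\,F_\gamma\!\downarrow)$. The key observation is that the predicate $\psi(\gamma)\equiv F_\gamma\!\downarrow$ is progressive, i.e.~$\isigma_1\vdash\prog(\widehat\gamma\,F_\gamma\!\downarrow)$. This is exactly the content of Sommer's development of the fast-growing hierarchy (\cite{sommer95}, with the case $\gamma=\varepsilon_0$ from \cite[Section~2]{freund-proof-length}): for $\gamma=0$ totality of $F_0(n)=n+1$ is trivial; for $\gamma=\delta+1$, if $F_\delta\!\downarrow$ then $F_{\delta+1}(n)=F_\delta^{n+1}(n)$ is defined by $n+1$ applications of $F_\delta$, which $\isigma_1$ can iterate; for $\lambda$ limit, $F_\lambda(n)=F_{\{\lambda\}(n)}(n)$ is defined as soon as $F_{\{\lambda\}(n)}\!\downarrow$, and $\{\lambda\}(n)\prec\lambda$ so the hypothesis $\forall_{\beta\prec\lambda}F_\beta\!\downarrow$ applies. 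I would cite this rather than reprove it, since the statement of the lemma says ``it is standard to observe the following''.

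Having $\prog(\widehat\gamma\,F_\gamma\!\downarrow)$, the assumed instance of transfinite induction yields $\forall_{\beta\prec\omega_{x+1}}F_\beta\!\downarrow$. Now I would close the gap between $\omega_{x+1}$ and $\varepsilon_0$. Since $\{\varepsilon_0\}(x)=\omega_{x+1}$, we have $\omega_{x+1}\prec\varepsilon_0$, so from $\forall_{\beta\prec\omega_{x+1}}F_\beta\!\downarrow$ and $\prog$ applied once at $\omega_{x+1}$ we obtain $F_{\omega_{x+1}}\!\downarrow$, hence in particular $F_{\omega_{x+1}}(x)\!\downarrow$. Finally, by the limit clause of the hierarchy at $\varepsilon_0$ we have $\feps(x)=F_{\{\varepsilon_0\}(x)}(x)=F_{\omega_{x+1}}(x)$, so $\feps(x)\!\downarrow$. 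All of this is a fixed finite block of $\isigma_1$-reasoning uniform in $x$, so it discharges the universally quantified conditional in the statement.

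The only genuine obstacle is bookkeeping about definedness: $\isigma_1$ does not prove $\feps\!\downarrow$, so every appeal to a value of $F_\beta$ must be guarded by a hypothesis that the value exists, and one must check that the guarded statements ($\prog$, the limit equations, the $\Delta_0$-graph identities) are themselves available in $\isigma_1$. This is precisely what Sommer's $\Delta_0$-formalization of the graph $F_\alpha(x)=y$ and the accompanying $\isigma_1$-provable basic properties are designed to handle, together with the extension to $\varepsilon_0$ in \cite[Section~2]{freund-proof-length}; so I expect the proof to be short, amounting to ``apply progressiveness, then the transfinite-induction instance, then unfold the definition of $\feps$ at $x$.''
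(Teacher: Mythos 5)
Your proposal is correct and follows essentially the same route as the paper: establish $\prog(\widehat\gamma\,F_\gamma\!\downarrow)$ in $\isigma_1$ by cases on $\gamma$, apply the assumed transfinite induction to get $\forall_{\beta\prec\omega_{x+1}}F_\beta\!\downarrow$, apply progressiveness once more at $\omega_{x+1}$, and unfold $\feps(x)=F_{\omega_{x+1}}(x)$. The only point you elide is that the successor case is not a direct citation of Sommer: his Theorem~5.3 yields $F_\beta^{i+1}(x)=F_\beta^i(F_\beta(x))$ rather than $F_\beta(F_\beta^i(x))$, so the paper inserts a small auxiliary induction to reconcile the two before the $\Sigma_1$-induction on $i$ goes through.
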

\begin{proof}
Using \cite[Theorem~5.3]{sommer95} we show that $\forall_{\beta\prec\alpha}\,F_\beta\!\downarrow$ implies $F_\alpha\!\downarrow$: For~$\alpha=0$ we have $F_0(x)=x+1$ by definition. If $\alpha$ is a limit then $F_\alpha(x)\!\downarrow$ reduces to $F_{\{\alpha\}(x)}(x)\!\downarrow$, where $\{\alpha\}(x)\prec\alpha$ is the $x$-th member of the fundamental sequence. In case $\alpha=\beta+1$ we must establish $F_\beta^{x+1}(x)\!\downarrow$ for all $x$. Indeed, we show $F_\beta^i(x)\!\downarrow$ by induction on $i$. In the step we get $F_\beta(F^i_\beta(x))\!\downarrow$ by induction hypothesis and the assumption $\forall_{\beta\prec\alpha}\,F_\beta\!\downarrow$. Unfortunately, \cite[Theorem~5.3]{sommer95} only provides $F_\beta^{i+1}(x)=F^i_\beta(F_\beta(x))$ but not~$F_\beta^{i+1}(x)=F_\beta(F^i_\beta(x))$. The solution is to establish
\begin{equation*}
\forall_{j\leq i}\forall_{y\leq F^i_\beta(x)}(F^j_\beta(y)=F^i_\beta(x)\rightarrow F^{j+1}_\beta(y)=F_\beta(F^i_\beta(x)))
\end{equation*}
by induction on $j$. For $j=i$ and $y=x$ we get $F^{i+1}_\beta(x)=F_\beta(F^i_\beta(x))$, as required. We have thus established
\begin{equation*}
\isigma_1\vdash\prog(\widehat\gamma\,F_\gamma\!\downarrow).
\end{equation*}
Together with the assumption $\ti(\omega_{x+1},\widehat\gamma\,F_\gamma\!\downarrow)$ we obtain $\forall_{\beta\prec\omega_{x+1}}\,F_\beta\!\downarrow$. Another application of $\prog(\widehat\gamma\,F_\gamma\!\downarrow)$ yields $F_{\omega_{x+1}}\!\downarrow$. In particular we have $F_{\omega_{x+1}}(x)\!\downarrow$, which is equivalent to $\feps(x)\!\downarrow$ by definition.
\end{proof}

By a classical result of Gentzen, transfinite induction up to each ordinal $\omega_{n+1}$ can be proved in Peano arithmetic. We must show that this is possible with proofs of polynomial length. For this purpose it will be useful to work with schematic proofs: Add a unary predicate variable $R$ to the language of arithmetic and write $\pa[R]$ for the version of Peano arithmetic, where the induction scheme is extended to all arithmetic formulas with $R$. We fix some first-order variable $\gamma$. For formulas $\varphi[R]$ and $\psi[R](\gamma)$ of the extended language we write $\varphi[\psi[R]]$ for the formula that results from $\varphi[R]$, when one replaces all subformulas $Rt$ by the formula $\psi[R](t)$, renaming bound variables as necessary.

The crucial ingredient of Gentzen's proof of transfinite induction is the ``jump formula"
\begin{equation*}
J[R](\gamma)\equiv\forall_\beta(\forall_{\delta\prec\beta}\,R\delta\rightarrow\forall_{\delta\prec\beta+\omega^\gamma}\,R\delta).
\end{equation*}
It is well-known that this formula has the following property:

\begin{lemma}\label{lem:jump-transfinite-induction}
We have
\begin{equation*}
\pa[R]\vdash\forall_\alpha(\ti(\alpha,\widehat\gamma\,J[R])\rightarrow\ti(\omega^\alpha,\widehat\gamma\, R\gamma)).
\end{equation*}
\end{lemma}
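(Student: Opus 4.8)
The plan is to carry out the classical Gentzen argument, whose heart is the observation that progressiveness of $R$ transfers to progressiveness of the jump formula $J[R]$. Working inside $\pa[R]$, I would fix an arbitrary ordinal $\alpha$, assume $\ti(\alpha,\widehat\gamma\,J[R])$ together with $\prog(\widehat\gamma\,R\gamma)$, and aim to derive $\forall_{\beta\prec\omega^\alpha}R\beta$; this gives $\ti(\omega^\alpha,\widehat\gamma\,R\gamma)$, and since $\alpha$ is arbitrary the universally quantified statement follows by generalization. Note that no induction on $\alpha$ is needed: the ordinal $\alpha$ enters only through a case distinction on whether it is zero, a successor, or a limit, which is $\Delta_0$-decidable in Sommer's notation system.

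The main step is the transfer lemma $\pa[R]\vdash\prog(\widehat\gamma\,R\gamma)\rightarrow\prog(\widehat\gamma\,J[R])$. To prove it I would assume $\prog(\widehat\gamma\,R\gamma)$, fix $\xi$, assume $\forall_{\eta\prec\xi}J[R](\eta)$, and prove $J[R](\xi)$; that is, fix $\beta$ with $\forall_{\delta\prec\beta}R\delta$ and show $\forall_{\delta\prec\beta+\omega^\xi}R\delta$. If $\xi=0$ then $\omega^\xi=1$, so $\beta+\omega^\xi=\beta+1$, and the claim follows from the hypothesis together with one application of $\prog(\widehat\gamma\,R\gamma)$ at $\beta$. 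If $\xi$ is a limit then $\omega^\xi=\sup_{\eta\prec\xi}\omega^\eta$, so any $\delta\prec\beta+\omega^\xi$ satisfies $\delta\prec\beta+\omega^\eta$ for some $\eta\prec\xi$, and $J[R](\eta)$ applied to $\beta$ gives $R\delta$. If $\xi=\eta+1$ then $\omega^\xi=\sup_n\omega^\eta\cdot n$, so it suffices to prove $\forall_{\delta\prec\beta+\omega^\eta\cdot n}R\delta$ for every natural number $n$; this I would do by induction on $n$, the step consisting of one application of $J[R](\eta)$ with $\beta+\omega^\eta\cdot n$ in place of $\beta$. This numeral induction is the only genuine appeal to an induction axiom, and since its induction formula contains the predicate $R$ it is crucial that we are in $\pa[R]$ rather than $\pa$. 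The routine work here is verifying the ordinal-arithmetic identities ($\omega^0=1$, $\omega^{\eta+1}=\sup_n\omega^\eta\cdot n$, $\omega^\lambda=\sup_{\eta\prec\lambda}\omega^\eta$, and continuity of $\beta+\omega^{(\cdot)}$) in the chosen notation system; these are $\isigma_1$-provable and hence available.

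With the transfer lemma in hand I would finish as follows. From $\prog(\widehat\gamma\,R\gamma)$ we obtain $\prog(\widehat\gamma\,J[R])$. Feeding this into the hypothesis $\ti(\alpha,\widehat\gamma\,J[R])$ yields $\forall_{\beta\prec\alpha}J[R](\beta)$, and one more application of $\prog(\widehat\gamma\,J[R])$ gives $J[R](\alpha)$ itself. Now instantiate $J[R](\alpha)$ at $\beta=0$: the antecedent $\forall_{\delta\prec 0}R\delta$ is vacuously true and $0+\omega^\alpha=\omega^\alpha$, so we conclude $\forall_{\delta\prec\omega^\alpha}R\delta$, which is precisely the consequent of $\ti(\omega^\alpha,\widehat\gamma\,R\gamma)$ under the standing assumption $\prog(\widehat\gamma\,R\gamma)$. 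Discharging the assumptions and generalizing on $\alpha$ completes the proof. I expect the only real obstacle to be bookkeeping: making the nested case analysis on $\xi$ mesh cleanly with the numeral induction in the successor case, and phrasing the ordinal-arithmetic facts in exactly the form the notation system delivers.
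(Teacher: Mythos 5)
Your proposal is correct and follows essentially the same route as the paper: reduce everything to the transfer lemma $\pa[R]\vdash\prog(\widehat\gamma\,R\gamma)\rightarrow\prog(\widehat\gamma\,J[R])$, use it together with $\ti(\alpha,\widehat\gamma\,J[R])$ to obtain $J[R](\alpha)$, and instantiate at $\beta=0$. The only difference is that the paper delegates the proof of the transfer lemma to Sommer's Lemma~4.4, whereas you carry out the standard case analysis (zero/successor/limit, with the numeral induction in the successor case) explicitly; that argument is the intended one and is fine.
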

\begin{proof}
The crucial observation is
\begin{equation*}
\pa[R]\vdash\prog(\widehat\gamma\, R\gamma)\rightarrow\prog(\widehat\gamma\,J[R]).
\end{equation*}
Details can, for example, be found in the proof of \cite[Lemma~4.4]{sommer95}. Aiming at $\ti(\omega^\alpha,\widehat\gamma\, R\gamma)$, let us now assume $\prog(\widehat\gamma\, R\gamma)$. Using the assumption $\ti(\alpha,\widehat\gamma\,J[R])$ we obtain $\forall_{\beta\prec\alpha}\,J[R](\beta)$, and then
\begin{equation*}
J[R](\alpha)\equiv\forall_\beta(\forall_{\delta\prec\beta}\,R\delta\rightarrow\forall_{\delta\prec\beta+\omega^\alpha}\,R\delta).
\end{equation*}
Setting $\beta=0$ the premise $\forall_{\delta\prec\beta}\,R\delta$ is trivial. Thus we get $\forall_{\delta\prec\omega^\alpha}\,R\delta$, as required for $\ti(\omega^\alpha,\widehat\gamma\, R\gamma)$.
\end{proof}

The ordinals $\omega_n$ can be reached by iterating the construction: Substitute $J[R]$ for the predicate variable $R$, to see that $\ti(\alpha,\widehat\gamma\,J[J[R]])$ implies $\ti(\omega^\alpha,\widehat\gamma\,J[R])$. With $\omega^\alpha$ at the place of $\alpha$ the lemma shows that $\ti(\omega^\alpha,\widehat\gamma\,J[R])$ implies $\ti(\omega_2^\alpha,\widehat\gamma\, R\gamma)$. Combining these implications we obtain
\begin{equation*}
 \pa[R]\vdash\ti(\alpha,\widehat\gamma\,J[J[R]])\rightarrow\ti(\omega_2^\alpha,\widehat\gamma\, R\gamma).
\end{equation*}
Starting with the trivial case $\alpha=1$ we get a proof of $\ti(\omega_2,\widehat\gamma\, R\gamma)$ outright. This does enable us to reach arbitrary ordinals $\omega_n$, but the length of the proof grows exponentially in $n$: As $J[R]$ contains two occurrences of $R$ the formula $J[J[R]]$ will contain four such occurrences, etc. It is well-known how to circumvent this problem: For convenience we assume that the biconditional is a primitive connective (this can be avoided as in~\cite[Section~3.2]{pudlak-length-98}). Then each formula is equivalent to a formula with a single occurrence of $R$, by~\cite[Chapter~7]{ferrante-rackoff-79}. In the present case it suffices to observe the equivalence between $R\delta_0\rightarrow R\delta_1$ and
\begin{multline*}
\theta(\delta_0,\delta_1)\equiv\exists_{v_0,v_1}[\forall_z(\bigvee_{i=0,1}z=\delta_i\rightarrow(Rz\leftrightarrow\bigwedge_{i=0,1}(z=\delta_i\rightarrow v_i=1)))\land{}\\
{}\land(v_0=1\rightarrow v_1=1)].
\end{multline*}
Thus the above jump formula $J[R]$ is equivalent to
\begin{equation*}
J'[R](\gamma)\equiv\forall_\beta\exists_{\delta_0\prec\beta}\forall_{\delta_1\prec\beta+\omega^\gamma}\theta(\delta_0,\delta_1).
\end{equation*}
Now put
\begin{align*}
J'_0[R](\gamma)&\equiv R\gamma,\\
J'_{n+1}[R](\gamma)&\equiv J'[J'_n[R]].
\end{align*}
As promised, these modified iterations satisfy the following:

\begin{lemma}
 The length of the formulas $J'_n[R]$ is bounded by a polynomial in $n$.
\end{lemma}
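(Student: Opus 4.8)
The plan is to bound the length of $J'_n[R]$ by analysing how much each iteration step grows a formula. The key point, which the construction has been engineered to guarantee, is that each substitution $J'_{n+1}[R]\equiv J'[\{\gamma\mid J'_n[R]\}]$ replaces the \emph{single} occurrence of $R$ in $J'[R]$ by a single copy of $J'_n[R]$. So, writing $|\varphi|$ for the number of symbols in $\varphi$, one expects a recurrence of the shape $|J'_{n+1}[R]|\leq |J'_n[R]| + c$ for a constant $c$, which immediately gives $|J'_n[R]|\leq |R\gamma| + cn$, a linear (hence polynomial) bound. That is the whole idea; the work is in making the constant $c$ honest.

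Concretely, I would first fix precisely what ``length'' means (number of symbols in the Gödel code, as agreed in Section~\ref{sect:preliminaries}) and recall that $J'[R](\gamma)$ contains exactly one occurrence of the predicate variable $R$ --- this is the reason $\theta(\delta_0,\delta_1)$ was introduced via the Ferrante--Rackoff trick, collapsing the two occurrences of $R$ in $R\delta_0\to R\delta_1$ into one occurrence of $Rz$ inside $\theta$. Then I would observe that substituting a formula $\psi(t)$ for the subformula $Rt$ in $J'[R]$ replaces a block of bounded size (the atom $Rt$, whose length depends only on the fixed term variable, here a bound variable $z$) by one copy of $\psi$, together with the renaming of bound variables. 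The renaming is the only subtle point: substituting $J'_n[R]$ into $J'[R]$ may require renaming the bound variables of $J'_n[R]$ to avoid capture by the quantifiers $\forall_\beta\exists_{\delta_0}\forall_{\delta_1}$ of $J'[R]$, and naive renaming could blow up the length of variable names. I would handle this by noting that the number of distinct bound variables occurring in $J'_n[R]$ stays bounded (again by a linear function of $n$, or even by a constant if one reuses names cleverly after each substitution), so that variable indices --- and hence the lengths of the variable symbols --- grow at most logarithmically, which is absorbed into the polynomial bound.

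So the argument runs: by induction on $n$, $|J'_n[R]| \leq |J'[R](\gamma)| + n\cdot k(n)$ where $k(n)$ accounts for the at most $O(n)$ variable renamings each costing $O(\log n)$ symbols; this is $O(n^2\log n)$, comfortably polynomial. If one is willing to be slightly more careful with variable reuse one gets the clean linear bound $O(n)$, but for the paper's purposes --- polynomial proofs of $\feps(n)\!\downarrow$, hence of $\con(\pa+\con^*(\pa))\!\restriction\!n$ --- any polynomial suffices. The main obstacle I anticipate is precisely the bookkeeping around bound-variable renaming under iterated substitution: one must be sure that capture-avoidance does not secretly reintroduce a multiplicative blowup (which is exactly what killed the unmodified $J[R]$, where the \emph{two} occurrences of $R$ caused the formula size to double at each step). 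Once one is convinced that with a single occurrence of $R$ the substitution is genuinely additive up to the cost of renaming, and that renaming is cheap, the lemma follows.
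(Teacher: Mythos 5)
Your proof is correct and follows essentially the same route as the paper: the single occurrence of $R$ in $J'[R]$ makes each substitution step additive, yielding a linear recurrence, and the only residual concern is the length of variable names under renaming, which the paper likewise dismisses as at most logarithmic (citing Pudl\'ak). Your extra care about capture-avoiding renaming is exactly the point the paper delegates to that reference, so nothing is missing.
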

\begin{proof}
Assuming that all variables have length one (i.e.~neglecting the lengths of their indices) we get a linear bound: Suppose that $J'_n[R]$ has length at most $k\cdot n$, where $k$ bounds the length of $J'[R]$. As $J'[R]$ contains a single occurrence of $R$ the length of $J'_{n+1}[R]\equiv J'[J'_n[R]]$ will be bounded by $k+k\cdot n=k\cdot(n+1)$. According to \cite[Section~3.2]{pudlak-length-98} the contribution of the variables is at most loga\-rithmic and can in fact be avoided.
\end{proof}

Now we can deduce the following effective version of Gentzen's result:

\begin{proposition}
For each number $n$ we have
\begin{equation*}
 \pa[R]\vdash\ti(\omega_n,\widehat\gamma\,R\gamma),
\end{equation*}
with polynomial in $n$ proofs.
\end{proposition}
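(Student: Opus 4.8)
The plan is to iterate Lemma~\ref{lem:jump-transfinite-induction} through the modified jump formulas $J'_n[R]$, keeping careful track of proof lengths at each stage. The key point is that substitution of $J'[R]$ for the predicate variable $R$ in the statement of Lemma~\ref{lem:jump-transfinite-induction} is a legitimate operation on $\pa[R]$-proofs: since $R$ occurs in the induction formulas of $\pa[R]$, we may uniformly replace every occurrence of $Rt$ by $J'[\{\gamma\,|\,\psi(\gamma)\}]$ and obtain again a valid $\pa[R]$-proof, of roughly the same number of inferences but with each formula now possibly longer. Applying Lemma~\ref{lem:jump-transfinite-induction} with $R$ replaced by $J'_n[R]$ and with $\alpha = \omega_n$, and using that $J'[R]$ is equivalent (provably in $\pa[R]$, since the biconditional is primitive) to $J[R]$, gives
\begin{equation*}
 \pa[R]\vdash\ti(\omega_n,\widehat\gamma\,J'_{n+1}[R])\rightarrow\ti(\omega^{\omega_n},\widehat\gamma\,J'_n[R])=\ti(\omega_{n+1},\widehat\gamma\,J'_n[R]).
\end{equation*}
Composing these implications for $n, n-1, \dots, 1, 0$ and using that $J'_0[R](\gamma)\equiv R\gamma$, we reduce $\ti(\omega_n,\widehat\gamma\,R\gamma)$ to $\ti(1,\widehat\gamma\,J'_n[R])$, which is trivially provable (progressiveness up to $1$ just says $\psi(0)$, which follows from $\prog$ applied at $0$ with vacuous hypothesis).

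Concretely, I would first record the single-step lemma: for every $m$ and every formula $\psi$,
\begin{equation*}
 \pa[R]\vdash\ti(\omega^{\alpha},\widehat\gamma\,J'[\{\gamma\,|\,\psi(\gamma)\}])\rightarrow\ti(\omega^{\omega^{\alpha}},\widehat\gamma\,\psi(\gamma)),
\end{equation*}
obtained from Lemma~\ref{lem:jump-transfinite-induction} by the substitution $R\mapsto\{\gamma\,|\,\psi(\gamma)\}$ together with the $\pa[R]$-provable equivalence of $J[R]$ and $J'[R]$ (the latter equivalence has a proof of constant size, hence contributes nothing asymptotically). Instantiating $\psi$ with $J'_n[R]$ and $\alpha$ with $\omega_{n-1}$ (so that $\omega^\alpha = \omega_n$) yields the chain of implications above. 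Then I would chain the $n+1$ implications together by a single sequence of cuts. The total number of inference steps is $O(n)$ copies of the (fixed-size) proof witnessing Lemma~\ref{lem:jump-transfinite-induction}, glued by $O(n)$ cuts; the formulas appearing in the $k$-th block are built from $J'_k[R]$ and $J'_{k+1}[R]$, whose lengths are bounded by a polynomial in $n$ by the previous lemma (in fact linear, once one uses Pudl\'ak's logarithmic bookkeeping of variable indices from \cite[Section~3.2]{pudlak-length-98}). Hence the whole proof has polynomial length in $n$, and one should also check that the ordinal notations $\omega_k$ and the terms $\omega^{\omega_{k-1}}$ admit codes of size polynomial (indeed logarithmic) in $n$, which is immediate from Sommer's \cite{sommer95} coding.

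The main obstacle — and the only point requiring genuine care rather than routine bookkeeping — is to ensure that substituting into the $\pa[R]$-proof of Lemma~\ref{lem:jump-transfinite-induction} does not blow up the proof length super-polynomially. Each substitution $R\mapsto\{\gamma\,|\,J'_n[R]\}$ replaces a predicate symbol by a formula of size $O(n)$; since this substitution is applied to a \emph{fixed} proof (the one from Lemma~\ref{lem:jump-transfinite-induction}, of constant size, independent of $n$) at each of the $n+1$ stages, and since $J'[R]$ contains only a \emph{single} occurrence of $R$, no nesting or duplication of the substituted formula occurs: the formula $J'_{n+1}[R]$ is literally $J'[R]$ with its one $R$-slot filled by $J'_n[R]$, so its size grows additively, not multiplicatively. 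This is exactly the reason the modified jump formula $J'$ with one occurrence of $R$ was introduced in place of the naive $J$ with two occurrences, and the preceding lemma has already certified the polynomial (linear) size bound for $J'_n[R]$. Everything else is a matter of assembling a bounded schema $O(n)$ times and invoking $\Sigma_1$-style trivialities at the base case $\alpha=1$.
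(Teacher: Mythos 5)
Your construction follows the paper's proof in all essentials: you iterate the jump lemma through the single-occurrence formulas $J'_k[R]$, note that each substitution $R\mapsto\{\gamma\mid J'_k[R]\}$ acts on a \emph{fixed} proof and grows formulas only additively (which is exactly what the lemma on the length of $J'_n[R]$ certifies), chain the resulting $n$ implications by cuts, and close with the trivial base case $\operatorname{TI}(1,\widehat\gamma\,J'_n[R])$. This is the paper's argument.

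There is, however, one step that fails as written: your handling of the ordinal parameters. You instantiate the ordinal variable of the jump lemma with closed terms for $\omega_{k-1}$ and $\omega^{\omega_{k-1}}$ and claim their codes are of logarithmic size, ``immediate from Sommer's coding''. That is false for the numerical codes: in any coding from which the Cantor normal form can be extracted in a $\Delta_0$ way (Sommer's included), the code of $\omega^\alpha$ is at least exponential in the code of $\alpha$, so the code of $\omega_k$ grows like a tower of height $k$ and its numeral is wildly super-polynomial. This is precisely why the paper adds the remark that $\omega_n$ in the statement denotes the defined function symbol $(\alpha,x)\mapsto\omega^\alpha_x$ rather than the numerical code of the ordinal: in the proof one keeps $\alpha$ as a free variable, proves the uniform implication $\forall_x(\operatorname{TI}(\omega^\alpha_x,\widehat\gamma\,J'_{k+1}[R])\rightarrow\operatorname{TI}(\omega^\alpha_{x+1},\widehat\gamma\,J'_k[R]))$, instantiates only the subscript $x$ to the short numerals $\overline{n-(k+1)}$ (verifying $\overline{n-(k+1)}+1=\overline{n-k}$ with short proofs), and sets $\alpha=1$ at the very end. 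Relatedly, your identification $\omega^{\omega_n}=\omega_{n+1}$ is not syntactic and must itself be derived (via $\omega^{\omega^\alpha_x}=\omega^\alpha_{x+1}$). Once the closed ordinal terms are replaced by this device, the rest of your argument goes through and coincides with the paper's.
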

To be precise, let us remark that $\omega_n=\omega_n^1$ refers to the defined function symbol $(\alpha,x)\mapsto\omega^\alpha_x$ rather than the numerical code of the actual ordinal $\omega_n$. Eliminating the defined function symbol, the proposition promises polynomial proofs of the formulas $\exists_\alpha(\omega^1_n=\alpha\land\ti(\alpha,\widehat\gamma\,R\gamma))$.
\begin{proof}
In the proof from Lemma~\ref{lem:jump-transfinite-induction}, replace $J[R]$ by the equivalent formula $J'[R]$. Also instantiate $\alpha$ to $\omega^\alpha_x$ and observe $\omega^{\omega^\alpha_x}=\omega^\alpha_{x+1}$, to get a proof
\begin{equation*}
 \pa[R]\vdash\forall_x(\ti(\omega^\alpha_x,\widehat\gamma\,J'[R])\rightarrow \ti(\omega^\alpha_{x+1},\widehat\gamma\,R\gamma)).
\end{equation*}
Replace any occurrence of $Rt$ in this proof by $J'_k[R](t)$, renaming bound variables as necessary. As the axioms of $\pa[R]$ are closed under substitution this yields proofs
\begin{equation*}
 \pa[R]\vdash\forall_x(\ti(\omega^\alpha_x,\widehat\gamma\,J'_{k+1}[R])\rightarrow \ti(\omega^\alpha_{x+1},\widehat\gamma\,J'_k[R])).
\end{equation*}
In view of the previous lemma the length of these proofs is polynomial in $k$. Now instantiate $x$ to the numeral $n-(k+1)$, for each $k<n$. As $\overline{(n-(k+1))}+1=\overline{n-k}$ can be verified with polynomial proofs, we get
\begin{equation*}
 \pa[R]\vdash\ti(\omega^\alpha_{n-(k+1)},\widehat\gamma\,J'_{k+1}[R])\rightarrow \ti(\omega^\alpha_{n-k},\widehat\gamma\,J'_k[R])
\end{equation*}
with proofs of length at most $p(n,k)$, for some polynomial $p$. Combining these implications from $k=n-1$ (hence $n-(k+1)=0$) to $k=0$ (hence $J'_k[R]\equiv R\gamma$) yields a proof
\begin{equation*}
 \pa[R]\vdash\ti(\omega^\alpha_0,\widehat\gamma\,J'_n[R])\rightarrow \ti(\omega^\alpha_n,\widehat\gamma\,R\gamma).
\end{equation*}
Its length is essentially bounded by $\sum_{k<n}p(n,k)$ and thus polynomial in $n$. For the ordinal $\alpha=1$ (hence $\omega^\alpha_0=1$) we have a trivial proof of $\ti(\omega^1_0,\widehat\gamma\, R\gamma)$, as in~\cite[Lemma~4.2]{sommer95}. Substituting $J'_n[R](t)$ for $Rt$ we get polynomial proofs of the formulas $\ti(\omega^1_0,\widehat\gamma\,J'_n[R])$. A final application of modus ponens yields the desired proofs $\pa[R]\vdash\ti(\omega^1_n,\widehat\gamma\,R\gamma)$ of polynomial length.
\end{proof}
In \cite{freund-proof-length} it has been shown that any proof of $\feps(n)\!\downarrow$ in the fragment $\isigma_n$ must be extremely long. This forms an interesting contrast with the feasible proofs available in full Peano arithmetic:

\begin{corollary}
For each number $n$ we have
\begin{equation*}
\pa\vdash\feps(n)\!\downarrow,
\end{equation*}
with polynomial in $n$ proofs.
\end{corollary}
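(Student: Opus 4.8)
The plan is to feed the previous proposition into Lemma~\ref{lem:ti-to-feps}; the only thing to watch is that the intermediate manipulations keep the proofs polynomial. Fix a numeral $n$ and apply the previous proposition with $n+1$ at the place of $n$. This gives a $\pa[R]$-proof of $\ti(\omega_{n+1},\widehat\gamma\,R\gamma)$ --- officially a proof of $\exists_\alpha(\omega^1_{n+1}=\alpha\land\ti(\alpha,\widehat\gamma\,R\gamma))$ --- whose length is bounded by a polynomial in $n+1$, and hence in $n$.

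Next I would substitute the formula $F_\gamma\!\downarrow\,\equiv\forall_x\exists_y\,F_\gamma(x)=y$ for the predicate variable $R$ throughout this proof, replacing each atom $Rt$ by $\forall_x\exists_y\,F_t(x)=y$ and renaming bound variables where necessary. Since in Sommer's coding the graph of $F_\alpha$ is a single $\Delta_0$-relation symbol, $F_\gamma\!\downarrow$ has size independent of $n$, so the substitution blows up each line of the proof by at most a constant factor and the total length stays polynomial in $n$. Moreover the axioms of $\pa[R]$ turn into axioms of $\pa$ under this substitution; in particular the induction axioms of $\pa[R]$ become arithmetical induction axioms, which are available in full $\pa$. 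We thus obtain a genuine $\pa$-proof of $\ti(\omega_{n+1},\widehat\gamma\,F_\gamma\!\downarrow)$ of length polynomial in $n$.

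Finally, take the proof of $\forall_x(\ti(\omega_{x+1},\widehat\gamma\,F_\gamma\!\downarrow)\rightarrow\feps(x)\!\downarrow)$ furnished by Lemma~\ref{lem:ti-to-feps}; this is a single proof in $\isigma_1\subseteq\pa$ whose size does not depend on $n$. Instantiating the universally quantified variable at the binary numeral $n$ and applying $\forall$-elimination adds only $\mathcal O(\lg n)$ symbols, since the resulting formula $\ti(\omega_{n+1},\widehat\gamma\,F_\gamma\!\downarrow)\rightarrow\feps(n)\!\downarrow$ differs from a fixed formula only by the occurrence of the logarithmically short numeral $n$. One application of modus ponens combines this implication with the polynomial proof of its antecedent from the previous paragraph, yielding the desired $\pa$-proof of $\feps(n)\!\downarrow$ of length polynomial in $n$.

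The only potential obstacle here is bookkeeping: one must check that substituting a fixed-size formula for $R$ in the schematic proof of the previous proposition genuinely preserves the polynomial bound --- it does, because $F_\gamma\!\downarrow$ is short and occurs a bounded number of times in each line and each axiom --- and that instantiating the universal quantifier of Lemma~\ref{lem:ti-to-feps} at a numeral of logarithmic length contributes negligibly. No ideas beyond those already assembled are needed.
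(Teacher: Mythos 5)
Your proposal is correct and follows exactly the paper's own argument: substitute $F_t\!\downarrow$ for $Rt$ in the polynomial proofs of $\ti(\omega_{n+1},\widehat\gamma\,R\gamma)$ from the preceding proposition, instantiate Lemma~\ref{lem:ti-to-feps} at the numeral $n$, and conclude by modus ponens. The additional bookkeeping you supply (constant-size substituted formula, logarithmic numeral length) is accurate but does not change the route.
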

\begin{proof}
 Substitute $F_t\!\downarrow$ for $Rt$ in the proofs from the previous proposition, to get
\begin{equation*}
 \pa\vdash\ti(\omega_{n+1},\widehat\gamma\,F_\gamma\!\downarrow)
\end{equation*}
with polynomial in $n$ proofs. Instantiating Lemma~\ref{lem:ti-to-feps} to $n$ gives proofs
\begin{equation*}
 \pa\vdash\ti(\omega_{n+1},\widehat\gamma\,F_\gamma\!\downarrow)\rightarrow\feps(n)\!\downarrow
\end{equation*}
of length polynomial (in fact logarithmic) in $n$. Modus ponens yields the result.
\end{proof}

Putting things together we obtain the following main result:

\begin{theorem}
For each number $n$ we have
\begin{equation*}
\pa\vdash\con(\pa+\con^*(\pa))\!\restriction\!n,
\end{equation*}
with polynomial in $n$ proofs. The same holds with the slow reflection statement $\rfn^*_{\Pi_2}(\pa)$ at the place of the slow consistency statement $\con^*(\pa)$.
\end{theorem}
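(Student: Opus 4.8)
The plan is to combine Theorem~\ref{thm:finite-consistency-conditional} with the polynomial proofs of $\feps(n)\!\downarrow$ constructed in the preceding corollary; no further ideas are needed, only a length bookkeeping.

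First I would instantiate the universally quantified variable in Theorem~\ref{thm:finite-consistency-conditional} to the numeral $n$. Since $\isigma_1$ is a subtheory of $\pa$, this yields
\begin{equation*}
 \pa\vdash\feps(n)\!\downarrow\,\rightarrow\con(\pa+\con^*(\pa))\!\restriction\!n.
\end{equation*}
The point is that the underlying $\isigma_1$-proof of the $\forall_x$-statement is one fixed proof, independent of $n$; passing from it to the instance at $x:=n$ costs only a single use of the universal-instantiation schema together with modus ponens, plus the cost of writing down the numeral $n$ and substituting it into the (fixed) matrix. With the binary numerals fixed in Section~\ref{sect:preliminaries} this overhead is $\mathcal O(\lg n)$, so the proof displayed above has length polynomial (in fact logarithmic) in $n$.

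Next I would invoke the preceding corollary, which provides $\pa$-proofs of $\feps(n)\!\downarrow$ whose length is bounded by a polynomial in $n$. One application of modus ponens combines these two proofs into a $\pa$-proof of $\con(\pa+\con^*(\pa))\!\restriction\!n$ whose length is the sum of the two lengths plus a constant, hence polynomial in $n$. For the variant with $\rfn^*_{\Pi_2}(\pa)$ in place of $\con^*(\pa)$ I would repeat these steps verbatim, using the second assertion of Theorem~\ref{thm:finite-consistency-conditional} instead.

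I do not expect a genuine obstacle: the only step that needs a moment's care is the claim that instantiating a fixed proof of a $\forall_x$-statement to a numeral contributes only polynomial (indeed logarithmic) overhead to the proof length. This is routine once the conventions on binary numerals and Hilbert-style proofs in sequence form from Section~\ref{sect:preliminaries} are in place, since the matrix into which the numeral is substituted is fixed and the numeral $n$ itself has length $\mathcal O(\lg n)$.
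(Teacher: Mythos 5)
Your proposal is correct and matches the paper's own proof exactly: instantiate Theorem~\ref{thm:finite-consistency-conditional} at the numeral $n$ to obtain proofs of $\feps(n)\!\downarrow\rightarrow\con(\pa+\con^*(\pa))\!\restriction\!n$ of length polynomial (in fact logarithmic) in $n$, then apply modus ponens with the polynomial proofs of $\feps(n)\!\downarrow$ from the preceding corollary, and likewise for the $\rfn^*_{\Pi_2}(\pa)$ variant.
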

\begin{proof}
Instantiating Theorem~\ref{thm:finite-consistency-conditional} to $n$ we get proofs
\begin{equation*}
 \pa\vdash\feps(n)\!\downarrow\rightarrow\con(\pa+\con^*(\pa))\!\restriction\!n
\end{equation*}
of length polynomial (in fact logarithmic) in $n$. Note that Theorem~\ref{thm:finite-consistency-conditional} provides the same with $\rfn^*_{\Pi_2}(\pa)$ at the place of $\con^*(\pa)$. We can conclude by the previous corollary.
\end{proof}

Pudl\'ak's original conjecture states that Peano arithmetic has no polynomial proofs of $\con(\pa+\con(\pa))\!\restriction\!n$, with usual rather than slow consistency. The following shows that Theorem~\ref{thm:finite-consistency-conditional} does not hold for usual consistency. This means that Pudl\'ak's conjecture cannot be refuted by our approach:

\begin{proposition}\label{prop:usual-consistency-false}
 We have
\begin{equation*}
 \pa\nvdash\forall_x(\feps(x)\!\downarrow\,\rightarrow\con(\pa+\con(\pa))\!\restriction\!x).
\end{equation*}
\end{proposition}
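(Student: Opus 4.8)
The plan is to run the argument of Corollary~\ref{cor:slow-doesnt-imply-usual} with the usual consistency statement in place of the slow one, and to notice that the conclusion one would reach is now forbidden by G\"odel's second incompleteness theorem. So I would assume, towards a contradiction, that $\pa\vdash\forall_x(\feps(x)\!\downarrow\,\rightarrow\con(\pa+\con(\pa))\!\restriction\!x)$. Since the existence of such a proof is a true $\Sigma_1$-fact, $\isigma_1$ proves $\operatorname{Pr}_\pa(\forall_x(\feps(x)\!\downarrow\,\rightarrow\con(\pa+\con(\pa))\!\restriction\!x))$. Combining this with $\isigma_1\vdash\forall_x\operatorname{Pr}_\pa(\feps(\dot x)\!\downarrow)$ --- which holds as in \cite[Lemma~1.5]{freund-slow-reflection}, or alternatively follows from the polynomial proofs of $\feps(n)\!\downarrow$ constructed in Section~\ref{sect:short-proofs} --- and applying the usual derivability conditions for $\operatorname{Pr}_\pa$ inside $\isigma_1$, one obtains
\begin{equation*}
 \isigma_1\vdash\forall_x\operatorname{Pr}_\pa(\con(\pa+\con(\pa))\!\restriction\!\dot x).
\end{equation*}

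From here I would proceed exactly as in the proof of Corollary~\ref{cor:slow-doesnt-imply-usual}. Since consistency implies $\Pi_1$-reflection, the last display yields $\isigma_1+\con(\pa)\vdash\forall_x\con(\pa+\con(\pa))\!\restriction\!x$, and as $\forall_x\con(\mathbf S)\!\restriction\!x$ is $\isigma_1$-provably equivalent to $\con(\mathbf S)$ for any r.e.\ theory $\mathbf S$ (a proof of contradiction always has some number of symbols), this amounts to
\begin{equation*}
 \pa+\con(\pa)\vdash\con(\pa+\con(\pa)).
\end{equation*}
Under the standing assumption that $\pa$ is consistent, $\con(\pa)$ is a true $\Pi_1$-sentence, so $\pa+\con(\pa)$ is a consistent r.e.\ theory extending $\isigma_1$; hence $\pa+\con(\pa)\nvdash\con(\pa+\con(\pa))$ by G\"odel's second incompleteness theorem, and we have the desired contradiction.

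I do not anticipate any real obstacle, since the argument is a routine diagonalization: it is essentially the proof of Corollary~\ref{cor:slow-doesnt-imply-usual} with $\con(\pa)$ substituted for $\con^*(\pa)$ throughout. The only step that warrants a comment is the appeal to G\"odel's theorem for $\pa+\con(\pa)$, which presupposes the consistency of $\pa$; but this is the blanket assumption of the paper, and it is plainly unavoidable, because an inconsistent $\pa$ would prove the displayed statement outright. Conceptually, the proposition records that our method gets past the incompleteness phenomenon only because slow consistency is so much weaker than $\con(\pa)$: putting $\con(\pa)$ in its place would turn Theorem~\ref{thm:finite-consistency-conditional} into a self-consistency statement of precisely the kind ruled out by G\"odel.
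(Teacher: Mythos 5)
Your proof is correct and follows essentially the same route as the paper's: internalize the assumed proof, combine with $\forall_x\operatorname{Pr}_\pa(\feps(\dot x)\!\downarrow)$, pass through $\Pi_1$-reflection to get a self-consistency proof for $\pa+\con(\pa)$, and contradict G\"odel's second incompleteness theorem. Your extra remarks (identifying $\forall_x\con(\mathbf S)\!\restriction\!x$ with $\con(\mathbf S)$, and noting that the consistency of $\pa$ is both assumed and necessary) merely make explicit what the paper leaves tacit.
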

\begin{proof}
 Aiming at a contradiction, assume that the given statement is provable. Then this fact can be verified in $\pa$, i.e.~we have
\begin{equation*}
 \pa\vdash\operatorname{Pr}_\pa(\forall_x(\feps(x)\!\downarrow\,\rightarrow\con(\pa+\con(\pa))\!\restriction\!x)).
\end{equation*}
As in the proof of Corollary~\ref{cor:slow-doesnt-imply-usual} we have $\pa\vdash\forall_x\operatorname{Pr}_\pa(\feps(\dot x)\!\downarrow)$. Thus we get
\begin{equation*}
 \pa\vdash\forall_x\operatorname{Pr}_\pa(\con(\pa+\con(\pa))\!\restriction\!\dot x).
\end{equation*}
Since consistency implies $\Pi_1$-reflection this yields
\begin{equation*}
 \pa+\con(\pa)\vdash\forall_x\con(\pa+\con(\pa))\!\restriction\!x,
\end{equation*}
which contradicts G\"odel's second incompleteness theorem.
\end{proof}

In view of this negative result, let us conclude with the following general question:

\begin{question}For any r.e.\ theory $\mathbf T$ one could ask about the least recursive ordinal~$\alpha$ (in some natural ordinal notation system)  such that
\begin{equation*}
\isigma_1\vdash\forall_x(F_\alpha(x)\!\downarrow\rightarrow\con(\mathbf T)\!\restriction\!x).
\end{equation*}
In particular, we find the question interesting for $\isigma_n$ ($n\ge 2$), $\pa+\con(\pa)$, and natural fragments of second-order arithmetic.  How does it relate to the proof-theoretic ordinal of the theory $\mathbf T$? 
\end{question}

\bibliographystyle{amsplain} 
\bibliography{Short-Proofs-Slow-Consistency}

\end{document}